\documentclass[final,1p,times,twocolumn]{elsarticle}

\usepackage{amssymb}
\usepackage{amsmath}
\usepackage{graphicx}
\usepackage[mathscr]{euscript}
\usepackage{amsthm}
\usepackage{cleveref}
\usepackage{pgfplots}
\usepackage{wasysym}
\usepackage{subcaption}

\theoremstyle{definition}
\newtheorem{dfn}{Definition}[section]
\numberwithin{dfn}{section}
\theoremstyle{theorem}
\newtheorem{prop}[dfn]{Proposition}
\theoremstyle{definition}

\numberwithin{equation}{section}

\newcommand\bb[1]{\mathbb{#1}}

\newcommand{\dds}{\frac{d}{ds}}

\newcommand{\ind}{\textup{ind}}
\newcommand\m[1]{\begin{pmatrix}#1\end{pmatrix}}

\journal{Journal of Geometry and Physics}

\begin{document}
	
	\begin{frontmatter}
		
		
		
		\title{Morse Theory and Stability of Periodic Billiard Trajectories} 

\author[Henry, Dan]{Henry Kavle, Daniel Offin} 

\affiliation[Henry]{organization={Norwich University},
		city={Northfield},
		postcode={05663}, 
		state={Vermont},
		country={United States}}
\affiliation[Dan]{organization={Department of Mathematics and Statistics, Queen's University at Kingston},
		addressline={}, 
		city={Kingston},
		postcode={K7L 3N6}, 
		state={Ontario},
		country={Canada}}

		
		\begin{abstract}
			In a billiard table with $C^2$ boundary, trajectories are critical points of the negative length functional on the space of admissible paths in the billiard table. The goal of this paper is twofold: first, to prove theorems which aid in computing the Morse index of trajectories; second, to use the Morse index to determine the stability of periodic trajectories. To the first end, we prove discrete analogs to the Morse index theorem for arbitrary trajectories, which are critical points with respect to fixed-endpoint variations, and for periodic trajectories, which are critical points with respect to periodic variations. To the second end, we prove a criterion for linear stability of periodic trajectories depending on the Morse index of the second iterate of the trajectory, and prove a sufficient condition for a billiard table to carry a linearly stable 2-periodic trajectory. 
		\end{abstract}
		
		
		
		\begin{keyword}
			
			
			\MSC 37J50 \sep 
			
		\end{keyword}

	\end{frontmatter}
	
		
		
\section{Introduction and Background}
		
\subsection{Billiards}
Let $\gamma:S^1 \to \bb{R}^2$ be a $C^2$ plane curve; this will be the boundary of our billiard table. Suppose for the sake of computation that $\gamma$ is positively oriented and its parametrization is unit-speed. A length-$N$ trajectory of a billiard is determined by a sequence of $N+1$ reflection points $x^i \in S^1$. Associated to this is a sequence of reflection angles $\phi^i \in (0,\pi)$, measured between the outgoing billiard path at the point $x^i$ and the velocity vector $\dot{\gamma}(x^i)$, and $\bar{\phi}^i \in (0,\pi)$, measured between the incoming billiard path at the point $x^i$ and the velocity vector $\dot{\gamma}(x^i)$. With this notation, the elastic reflection law is $\phi^i = \bar{\phi}^i$, that is, ``the angle of reflection equals the angle of incidence," see \cref{billiardNotation}.
		
\begin{figure}
	\centering
	\begin{tikzpicture}
		\draw [->] plot [smooth,tension=1] coordinates{(-2.5,.5) (0,0) (2.5,.5)};
		\draw [->] (0,0)--(2,0);
		\draw [>-] (-1,1)--(0,0);
		\draw [dashed] (-1.5,1.5)--(-1,1);
		\draw [->] (0,0) --(2,2);
		\draw[dashed] [->](0,0)--(1,-1);
		\draw (1/2,0) arc [radius = 1/2,start angle = 0, end angle = -45];
		\draw (1/2,0) arc [radius = 1/2,start angle = 0, end angle = 45];
		\node at (.8,1/2) [anchor = west] {$\phi^{i+1}$};
		\node at (.8,-1/2)[anchor = west] {$\bar{\phi}^{i+1}$};
		\node at (-1,-1/2) [anchor = west] {$x^{i+1}$};
		\node at (-1.6,1.6) [anchor = south]{$x^i,\phi^i$};
		\node at (2.5,.5) [anchor = west] {$\gamma$};
		\node at (2,0) [anchor = west] {$\dot{\gamma}$};
	\end{tikzpicture}
	\caption{\label{billiardNotation}Notation for the billiard system. }
\end{figure}
		
The elastic reflection law is equivalent to a variational principle; trajectories of the billiard map extremize their path length. Let $\mathcal{C}^N \subset (S^1)^{N+1}$ be the space of admissible paths, that is, those such that the line segment between $\gamma(x^i)$ and $\gamma(x^{i+1})$ is contained inside and never tangent to $\gamma$ for all $i = 1,...,N+1$. Note that when $\gamma$ is convex, $\mathcal{C}^N = (S^1)^{N+1}$. When $\gamma$ is not convex, $\mathcal{C}^N$ is an open submanifold of $(S^1)^{N+1}$. Let $\mathcal{C}^N_R$ be the submanifold $x = \{(x^1,...,x^{N+1}) \in \mathcal{C}^N| x^i \neq x^{i+1}, \ i = 2,...,n\}$ satisfying some boundary relation $(x^1, x^{N+1}) \in R \subset S^1\times S^1$. For example, fixing a particular pair $R = (x^1,x^{N+1})$ specifies fixed-endpoint boundary conditions, whereas $R = \Delta$, the diagonal in $S^1\times S^1$, specifies periodic boundary conditions. 
\begin{dfn} The space of variations with respect to a boundary relation $R$ is $T_x\mathcal{C}^N_R$. We reserve the notation $\mathcal{C}_0^N$ for paths with fixed endpoints and $T_x\mathcal{C}^N_0$ for fixed-endpoint variations. We reserve the notation $\mathcal{C}_\Delta^N$ for periodic paths and $T_x\mathcal{C}^N_\Delta$ for periodic variations. 
\end{dfn}
Our Lagrangian is the negative length functional
\begin{equation}\label{billiardLengthFunctional}
	L(x) = -\sum_{i=1}^N \ell(x^i,x^{i+1})
\end{equation}
where $\ell(x^i,x^{i+1})$ is the Euclidean distance between $\gamma(x^i)$ and $\gamma(x^{i+1})$. As shorthand, we denote $\ell(x^i,x^{i+1}) = \ell_i.$ In order to clean up signs, we may also use $S_i = -\ell_i.$ Critical points of this functional satisfy the discrete Euler-Lagrange equation
\begin{equation}\label{discreteE-L}
	\partial_1\ell_i + \partial_2\ell_{i-1} = 0, \ i = 2,...,N
\end{equation}
subject to boundary conditions
\begin{equation}\label{discreteBCs}
	\partial_1\ell_1\delta x^1 + \partial_2\ell_{N} \delta x^{N+1}  = 0 
\end{equation}
for all variations $(\delta x^1, \delta x^{N+1}) \in TR$. 
		
A computation shows that $-\partial_1(-\ell_i) = -\cos(\phi^i)$ and $\partial_2(-\ell_{i-1}) = -\cos(\bar{\phi}^i)$, so the discrete Euler-Lagrange equations are equivalent to the elastic reflection law. Furthermore, the mixed partial $\partial^2_{21}(-\ell_i)$ is always negative. We will compute this later, but intuitively, for an admissible trajectory, fixing $x^i$ and increasing $x^{i+1}$ increases $\phi^i$, and for $\phi^i \in (0,\pi)$, $\cos(\phi^i)$ is decreasing. This also ensures that given any consecutive $x^{i-1}$ and $x^i$, there is a unique $x^{i+1}$ satisfying the discrete Euler-Lagrange equation.
		
The boundary conditions \cref{discreteBCs} on the partials are dependent on the boundary relation specified by $R$. Abstractly, our critical point must satisfy $dL = 0$ on $T_x\mathcal{C}^N_R$, although for the sake of working in convenient coordinates, we handle this as a subspace of $T_x\mathcal{C}^N$. For instance, with the periodic boundary relation $R = \Delta$, the boundary condition on the partials ensures $\phi^1 = \phi^{N+1}$. 
		
As suggested above, we may also describe a billiard as a discrete Hamiltonian system. Formally, $-\ell$ is a local generating function for the symplectic map 
\begin{align}
	T:& S^1 \times [-1,1] \to S^1 \times [-1,1];\\
	& \m{x^i \\y^i} \mapsto \m{x^{i+1}\\y^{i+1}}
\end{align}
where $y^i = -\partial_1(-\ell(x^i,x^{i+1})) = -\cos(\phi^i)$ If $\gamma$ is strictly convex, $-\ell$ is a global generating function for an exact monotone symplectic twist map of the annulus. If $\gamma$ is not strictly convex, the billiard mapping on the phase annulus is a local symplectomorphism in a neighborhood of an admissible trajectory, again with $-\ell$ as a local generating function, but it is no longer a symplectomorphism of the annulus, as it is discontinuous. For details and other analysis, see \cite{Tabachnikov:billiards}, \cite{McD-S:IntroSympTop}, and \cite{BialyTsod}.  We will focus primarily on the Lagrangian viewpoint, but the Hamiltonian setting is necessary to discuss stability of periodic trajectories in a dynamical systems sense.
		
		\subsection{Pieces of Morse Theory}
		Here we recall key definitions in Morse theory and state a technical proposition that will help us compute the indices of trajectories in billiards. 
		
		\begin{dfn} Let $H$ be a symmetric bilinear form on a finite-dimensional vector space $U$. The \emph{index} of $H$ is the maximal dimension of a subspace of $U$ on which $H$ is negative definite. The \emph{nullity} of $H$ is the dimension of the nullspace of $H$. 
		\end{dfn}
		
		\begin{dfn} Let $M$ be a smooth manifold, $f:M\to \bb{R}$ a smooth function, and $x \in M$ a critical point of $f$. The \emph{Hessian} of $f$ at $x$ is the symmetric bilinear form $f_{**}: T_xM \times T_xM \to \bb{R}$ defined by $f_{**}(V,W) = \tilde{V}(\tilde{W}(f))$, where $\tilde{V}, \ \tilde{W}$ are extensions of $V,W$ to vector fields on $M$. 
		\end{dfn} 
		
		\begin{dfn} Let $M$ be a smooth manifold, $f:M\to \bb{R}$ a smooth function, and $x \in M$ a critical point of $f$. The \emph{Morse index} $i(x)$ of $f$ at $x$ is the index of the Hessian of $f$ at $x$. Likewise, the \emph{nullity} is the dimension of the nullspace of the Hessian $f$ at $x$. 
		\end{dfn}
		\begin{dfn}A critical point is \emph{nondegenerate} if its nullity is zero, and a function is a \emph{Morse function} if it has only nondegenerate critical points.
		\end{dfn}  
		\noindent Note that the term ``index" refers to both the index of a bilinear form, and the index of a critical point. In practice the distinction is clear from context.
		
		In coordinates $\frac{\partial}{\partial x^i}$ on $T_xM$, the Hessian $f_{**}$ can be represented by the matrix $(D^2f_x)_{i,j} = \left(\frac{\partial^2 f}{\partial x^i\partial x^j}(x)\right)$, which we will denote by $H_x$. For the purposes of this paper, we will mostly work with this matrix expression.
		
		It is convenient to compute the Morse index on particular subspaces, rather than directly. We can use the following formula from \cite{BTZ:geodesics} to decompose the index computation into computations of indices on subspaces.
		\begin{prop}\label{indexSubspace} Let $H$ be a symmetric form on a finite dimensional vector space $U$. For any subspace $W \subset U$, 
			\begin{equation}\label{IndexDecomposition}
				\ind H = \ind H|_W + \ind H|_{W^\perp} + \dim (W \cap W^\perp) - \dim (W \cap \ker H),
			\end{equation}
			where $W^\perp$ is the orthogonal complement to $W$ with respect to the form $H$ and the kernel of $H$ is 
			\begin{equation}
				\ker H = \{ u \in U \ | \ H(u, v) = 0 \textrm{ for all } v \in U\}.
			\end{equation} 
		\end{prop}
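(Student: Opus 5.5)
We reduce to the nondegenerate case by splitting off the kernel, and then prove that case with a Sylvester-type decomposition.

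\textbf{Reduction.} Let $K=\ker H$ and choose a complement $U=U_0\oplus K$. The form $H$ descends to a nondegenerate form $\bar H$ on $U/K\cong U_0$, and $\ind H=\ind \bar H$, because adding kernel vectors never changes $H(u,u)$. Let $\pi:U\to U/K$ be the projection and set $\bar W=\pi(W)$. Then:
\begin{itemize}
\item $\ind H|_W=\ind \bar H|_{\bar W}$, since $H|_W$ factors through $W/(W\cap K)\cong \bar W$.
\item Since $K\subset W^\perp$, we have $\pi(W^\perp)=\bar W^{\perp}$, where $\bar W^\perp$ is taken with respect to $\bar H$. Hence $\ind H|_{W^\perp}=\ind \bar H|_{\bar W^\perp}$.
\item $\pi$ maps $W\cap W^\perp$ onto $\bar W\cap\bar W^\perp$ with kernel $W\cap K$. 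Therefore $\dim(W\cap W^\perp)-\dim(W\cap K)=\dim(\bar W\cap \bar W^\perp)$.
\end{itemize}
So it suffices to prove
\[
\ind H=\ind H|_W+\ind H|_{W^\perp}+\dim(W\cap W^\perp)
\]
for nondegenerate $H$.

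\textbf{Nondegenerate case.} Now $\dim W^\perp=\dim U-\dim W$ and $(W^\perp)^\perp=W$. Let $R=W\cap W^\perp$, which is the radical of both $H|_W$ and $H|_{W^\perp}$. Choose complements $W=W_1\oplus R$ and $W^\perp=W_2\oplus R$. Then $H|_{W_1}$ and $H|_{W_2}$ are nondegenerate, and $W_1\perp W_2$.

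Set $V=W_1\oplus W_2$. This sum is direct and $H|_V$ is nondegenerate. Its orthogonal complement $V^\perp$ has dimension $\dim U-\dim V=2\dim R$ and contains $R$. Since $R$ is isotropic and $H|_{V^\perp}$ is nondegenerate, $R$ is a Lagrangian subspace of $V^\perp$. A nondegenerate form with a Lagrangian subspace of half the dimension is split, with index exactly $\dim R$.

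Then $U=V\oplus V^\perp$ is an $H$-orthogonal decomposition, so
\[
\ind H=\ind H|_{W_1}+\ind H|_{W_2}+\dim R.
\]
Finally, $\ind H|_{W_1}=\ind H|_W$ and $\ind H|_{W_2}=\ind H|_{W^\perp}$, because $R$ is the radical in each case.

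\textbf{Main obstacle.} The key step is showing that $\ind H|_{V^\perp}=\dim R$. We prove this by picking a complement $R'$ of $R$ in $V^\perp$ and pairing it dually with $R$ via $H$. Choosing dual bases gives the block matrix $\begin{pmatrix}0&I\\ I&*\end{pmatrix}$. After a change of basis that clears the $*$ block, this becomes a sum of $\dim R$ hyperbolic planes, each contributing index one.
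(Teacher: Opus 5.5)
Your proof is correct, and it supplies something the paper does not: the paper gives no argument for this proposition. It only cites \cite{BTZ:geodesics}, which in turn defers to \cite{Artin:GeometricAlgebra}. So your argument is a genuinely self-contained route.

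You first pass to $U/\ker H$. You then split $U=W_1\perp W_2\perp V^\perp$, where $V^\perp$ is a nondegenerate space of dimension $2\dim R$ in which $R=W\cap W^\perp$ sits as a Lagrangian. This is the classical mechanism of completing the radical of a subspace to hyperbolic planes. Its advantage is that it shows exactly where each term of the formula comes from. In the nondegenerate case, $\dim(W\cap W^\perp)$ is precisely the index of the hyperbolic block $V^\perp$. The term $-\dim(W\cap\ker H)$ is precisely the correction $\dim(W\cap W^\perp)-\dim(\bar W\cap\bar W^\perp)$ incurred by quotienting.

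The steps all check out. The identity $\pi^{-1}(\bar W^\perp)=W^\perp$ gives both $\pi(W^\perp)=\bar W^\perp$ and the surjectivity of $W\cap W^\perp\to\bar W\cap\bar W^\perp$. Your proof that $H|_{W_2}$ is nondegenerate correctly uses $(W^\perp)^\perp=W$, which holds once $H$ is nondegenerate.

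Two details you left implicit deserve a line each:
\begin{itemize}
\item The pairing $R\times R'\to\mathbb{R}$ is perfect because $R$ is isotropic and $H|_{V^\perp}$ is nondegenerate.
\item You can skip the change of basis entirely. A negative (resp.\ positive) definite subspace of $V^\perp$ meets the isotropic $R$ only in $0$, so both the index and the coindex of $H|_{V^\perp}$ are at most $\dim R$. Nondegeneracy of $H|_{V^\perp}$ then forces each to equal $\dim R$.
\end{itemize}
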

		\noindent For a full proof, \cite{BTZ:geodesics} refers to Theorem 3.8 in \cite{Artin:GeometricAlgebra}.  
		
		\subsection{The Mountain Pass Theorem} 
		The mountain pass theorem, in its many variations, is a useful tool for finding additional critical points from known minimizing critical points. We state the standard version proved in \cite{Nicolaescu:MorseTheory} as well as a modified version for manifolds with boundary. 
		\begin{prop}[Mountain Pass Theorem for Compact Manifolds]\label{mtnPass} Let $M$ be a compact manifold, $f:M\to \bb{R}$ a smooth function. Suppose $x_0$ and $x_1$ are distinct local minima of $f$. Let $\Gamma = \{\gamma \in C^1([0,1],M) | \gamma(0) = x_0,\ \gamma(1) = x_1\}$. Then $f$ has a critical value $c$, attained by some critical point $x_2$, characterized by 
			\begin{equation} c = \inf_{\gamma \in \Gamma}\max_{s \in [0,1]} f(\gamma(s)).
			\end{equation}
		\end{prop}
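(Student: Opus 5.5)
The plan is to give the standard minimax proof by contradiction, using the negative gradient flow of $f$ with respect to an auxiliary Riemannian metric on $M$. First I check that $c$ is finite and lies above $\max(f(x_0),f(x_1))$. Finiteness holds because $M$ is compact, so $f$ is bounded; we may also assume $M$ is connected, since otherwise $\Gamma$ is empty and there is nothing to prove. The lower bound requires strictness. I will use that the minima are strict, i.e.\ isolated local minima, which is the standard hypothesis; if either minimum is not isolated, then it is a limit of other critical points and the conclusion is essentially immediate after a separate degenerate-case argument. Choose a small radius $r$ so that $x_1 \notin \overline{B_r(x_0)}$ and $f > f(x_0)$ on $\partial B_r(x_0)$. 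Compactness of $\partial B_r(x_0)$ then gives $\min_{\partial B_r(x_0)} f > f(x_0)$. Every $\gamma \in \Gamma$ must cross $\partial B_r(x_0)$, so $c \ge \min_{\partial B_r(x_0)} f > f(x_0)$.

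Next, suppose for contradiction that $c$ is not a critical value. The critical set $K$ is compact, so $f(K)$ is compact, and there is some $\epsilon>0$ such that $[c-2\epsilon,c+2\epsilon]$ contains no critical values. I will also choose $\epsilon$ small enough that $c-2\epsilon > \max(f(x_0),f(x_1))$. The compact set $f^{-1}[c-2\epsilon,c+2\epsilon]$ contains no critical points, so $|\nabla f| \ge \delta > 0$ on it. This compactness step is exactly what replaces the Palais–Smale condition in this setting.

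Now take the negative gradient flow $\varphi_t$, which is globally defined because $M$ is compact. Along the flow, $f$ decreases at rate at least $\delta^2$ while the trajectory stays in the strip. So for $T = 2\epsilon/\delta^2$, the map $\varphi_T$ sends the sublevel set $\{f \le c+\epsilon\}$ into $\{f \le c-\epsilon\}$. Both $x_0$ and $x_1$ are critical points, so they are fixed by $\varphi_T$.

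Finally, choose $\gamma \in \Gamma$ with $\max f\circ\gamma \le c+\epsilon$. Then $\varphi_T\circ\gamma$ is still $C^1$, because the flow is smooth, provided $f$ is at least $C^2$ so that the gradient field is $C^1$. It still has endpoints $x_0$ and $x_1$, so it lies in $\Gamma$. But $\max f\circ\varphi_T\circ\gamma \le c-\epsilon < c$, which contradicts the definition of $c$. Hence $c$ is a critical value, attained at some $x_2$. Moreover $c > f(x_i)$, so $x_2$ is distinct from $x_0$ and $x_1$.

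The main obstacle is the deformation step: ensuring that flowed points which begin in the strip either leave it downward or decrease by the full $2\epsilon$, and that points already below $c-\epsilon$ stay below it. Both follow from monotonicity of $f$ along the flow together with the uniform gradient bound on the strip. The other delicate point is the non-strict minimum case, which I expect to handle separately, or to exclude by hypothesis as in \cite{Nicolaescu:MorseTheory}.
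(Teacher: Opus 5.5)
Your argument is correct. It is the standard negative-gradient-flow deformation (min--max) proof, which is the argument the paper relies on: it gives no proof of its own and simply cites \cite{Nicolaescu:MorseTheory}. One simplification: for the statement as written you never need strictness of the minima, because assuming $c$ is not a critical value already forces $c>\max(f(x_0),f(x_1))$ (the $f(x_i)$ are critical values and $c\geq\max(f(x_0),f(x_1))$ trivially), so strictness matters only for your extra claim that $x_2\neq x_0,x_1$.
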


\begin{prop}[Mountain Pass Theorem for Compact Manifolds with Boundary]\label{mtnPassBoundary} Let $M$ be a compact manifold with boundary, and $f:M\to R$ a smooth function. Suppose that the gradient $\nabla f$ given by an auxiliary Riemannian metric on $M$, is inward pointing along the boundary $\partial M$. Further suppose $x_0$ and $x_1$ are distinct local minima of $f$. Let $\Gamma = \{\gamma \in C^1([0,1],M) | \gamma(0) = x_0,\ \gamma(1) = x_1\}$. Then $f$ has a critical value $c$, attained by some critical point $x_2$, characterized by 
	\begin{equation} c = \inf_{\gamma \in \Gamma}\max_{s \in [0,1]} f(\gamma(s)).
	\end{equation}
\end{prop}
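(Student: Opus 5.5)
The plan is to reduce the statement to the closed-manifold case, \cref{mtnPass}. The key point is that the negative gradient flow of $f$ never leaves $M$. Since $\nabla f$ points inward along $\partial M$, the flow of $-\nabla f$ is the one that descends $f$, and this creates a tension: descending $f$ along $-\nabla f$ would push points \emph{out} of $M$. So the first thing to pin down is the sign convention.

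Read literally (in the paper's applications $f$ is a negative length, and the boundary is where consecutive reflection points collide and length is locally minimal), the hypothesis is that $f$ increases as one moves inward. The sublevel sets $\{f\le a\}$ then tend to hug the boundary. The appropriate deformation is therefore not the plain gradient flow. Instead I would use a pseudo-gradient vector field $V$ with the following properties:
\begin{itemize}
\item $df(V)\le 0$ everywhere;
\item $df(V)<0$ away from critical points;
\item $V$ is tangent to or inward-pointing on $\partial M$.
\end{itemize}
I would build $V$ by a partition of unity. In the interior, take $V=-\nabla f$. In a collar $\partial M\times[0,\epsilon)$, use the inward hypothesis to see that there are no critical points near $\partial M$, since $\nabla f\neq 0$ there. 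Then correct $-\nabla f$ by adding a multiple of the inward normal, chosen so that the result still decreases $f$ strictly. I expect this to be possible whenever the tangential part of $\nabla f$ is nonzero. Where $\nabla f$ is purely normal, $-\nabla f$ points outward, and this is the main obstacle: such boundary points would act as ``critical points at the boundary'' and could trap the minimax.

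To handle the obstacle I would argue instead with the flow of $+\nabla f$ applied to the complement, or equivalently consider $-f$. The cleaner route is to double $M$ along its boundary, or to glue on a collar, and extend $f$ to a compact manifold $\tilde M$ without boundary. In the collar coordinate $t\le 0$ outside $M$, choose the extension so that it has no critical points in $\tilde M\setminus M$ and so that it is greater than or equal to its boundary values. With these choices, any path leaving $M$ can be pushed back into $M$ without raising its maximum: radially retract the outside portion onto $\partial M$. The minimax values over paths in $M$ and over paths in $\tilde M$ then coincide. Moreover $x_0$ and $x_1$ remain distinct local minima, since they lie in the interior: the inward gradient means the boundary contains no critical points.

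Next, apply \cref{mtnPass} on $\tilde M$. It produces a critical point $x_2$ of the extension with critical value $c=\inf\max f$. Because the extension has no critical points outside $M$ and none on $\partial M$, the point $x_2$ lies in the interior of $M$. The two minimax values agree by the retraction argument, so $c$ is the value stated in \cref{mtnPassBoundary}.

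The hard step is constructing the extension with no new critical points and with a non-increasing retraction. I would try $\tilde f(p,t)=f(p,0)-\lambda t+\psi(t)$ in collar coordinates. Here $\lambda>0$ is small, and $\psi$ is chosen so that the $t$-derivative keeps the sign of the inward normal derivative of $f$, which is bounded away from zero by compactness. This makes $\partial_t\tilde f\neq 0$ throughout the collar, so the collar contains no critical points.
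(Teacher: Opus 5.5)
Your second paragraph correctly locates the obstruction: boundary points where $\nabla f$ is the inward normal, so no admissible direction decreases $f$ to first order. But the closed-manifold extension meant to bypass it cannot exist. Worse, under the hypothesis as literally stated the obstruction is genuine, so no argument can succeed.

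\textbf{Why the extension fails.} In your collar coordinate ($t\ge 0$ on $M$), ``$\nabla f$ inward'' means $\partial_t f(p,0)>0$. So any $C^1$ extension has $\tilde f(p,t)<f(p,0)$ for small $t<0$, and your own choice with $\partial_t\tilde f>0$ makes $\tilde f$ strictly lower outside $M$. This has three consequences:
\begin{itemize}
\item the requirement that $\tilde f$ be at least its boundary values outside $M$ fails;
\item retracting the outer part of a path onto $\partial M$ can raise its maximum;
\item the minimax over $\tilde M$ can be strictly smaller than the minimax over $M$.
\end{itemize}
Nor can $\tilde M\setminus M$ be free of critical points. The function $\tilde f$ attains its minimum on the compact set $\overline{\tilde M\setminus M}$, and not on $\partial M$, since $\tilde f$ decreases as one leaves $M$. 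So the minimum is a critical point outside $M$. Hence \cref{mtnPass} applied on $\tilde M$ may return a point outside $M$, at a different level.

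\textbf{The literal statement is false.} On $M=[-\tfrac{6}{5},\tfrac{6}{5}]\times S^1$ with the flat metric, let $f(t,\theta)=\cos^2\theta\,(3-t^2)+\sin^2\theta\,(2t^2-t^4)$. We have $\partial_\theta f=-\sin 2\theta\,(3-3t^2+t^4)$ with $3-3t^2+t^4>0$, and one checks that the critical points are:
\begin{itemize}
\item the minima $(0,\tfrac{\pi}{2})$ and $(0,\tfrac{3\pi}{2})$, with value $0$;
\item the saddles $(\pm1,\tfrac{\pi}{2})$ and $(\pm1,\tfrac{3\pi}{2})$, with value $1$;
\item the maxima $(0,0)$ and $(0,\pi)$, with value $3$.
\end{itemize}
At $t=\pm\tfrac{6}{5}$, $\partial_t f$ is a convex combination of $\mp\tfrac{12}{5}$ and $\mp\tfrac{264}{125}$, so $\nabla f$ points inward.

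Every path between the minima crosses one of the segments $\theta=0$, $\theta=\pi$. On these segments $f=3-t^2\ge\tfrac{39}{25}$, with equality only on $\partial M$. Meanwhile the path that goes out along $\theta=\tfrac{\pi}{2}$, along the circle $t=\tfrac{6}{5}$ through $\theta=\pi$, and back along $\theta=\tfrac{3\pi}{2}$ has maximum exactly $\tfrac{39}{25}$. Thus $c=\tfrac{39}{25}$ is not a critical value. Every path realizing it passes through a local maximum of $f|_{\partial M}$ at which $\nabla f$ is the inward normal; these are exactly your trapping points.

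\textbf{The missing idea.} The hypothesis must have the opposite sign: $-\nabla f$ points inward, i.e.\ $f$ increases toward $\partial M$. That is what the billiard application actually supplies. Separating coincident reflection points lengthens the path, so $-\ell$ is largest at the collision boundary, as your own parenthetical says.

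With that reading, the paper's proof is immediate. The flow of $-\nabla f$ is defined for all forward time and leaves $M$ invariant, so Nicolaescu's deformation argument for \cref{mtnPass} goes through verbatim, with no extension or pseudo-gradient needed. The paper's wording of the hypothesis, and of its one-line proof, carries the same sign slip; the cited argument needs the descending flow to stay in $M$.
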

The hypothesis that the gradient point inward at the boundary is sufficient to proceed according to the proof in \cite{Nicolaescu:MorseTheory}.
\begin{dfn} We call the critical point $x_2$ from \cref {mtnPass} or \cref{mtnPassBoundary} a \emph{mountain pass type critical point}. 
\end{dfn} 

Assuming the critical point is nondegenerate, we automatically know the Morse index of a mountain pass type critical point. 
		\begin{prop}\label{mtnPassIndex} Let $M$ be a compact manifold, and let $x$ be a nondegenerate mountain pass type critical point of a Morse function $f:M \to \bb{R}$. Then $i(x) = 1$. 
		\end{prop}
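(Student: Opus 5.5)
The plan is to read ``mountain pass type'' in the local, topological sense that the minimax characterization in \cref{mtnPass} and \cref{mtnPassBoundary} provides, and then to use the Morse lemma at the nondegenerate point $x$. Concretely, set $c=f(x)$. I will say that $x$ is of mountain pass type if, for every sufficiently small neighborhood $N$ of $x$, the set $N\cap\{f<c\}$ is nonempty and not path connected.

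The first step is to justify this reading from the minimax definition. This is Hofer's refinement of the mountain pass theorem. Take a sequence of paths $\gamma_n\in\Gamma$ with $\max f\circ\gamma_n\to c$. Deform these paths by the negative gradient flow; when $\partial M\neq\emptyset$, the inward pointing hypothesis keeps the flow inside $M$. One then shows that if $N\cap\{f<c\}$ were empty or path connected for some small $N$, the portions of the paths passing through $N$ near level $c$ could be rerouted inside $N\cap\{f<c\}$. Combined with the flow away from $N$, this would lower the maximum below $c$, contradicting the definition of $c$. I expect this step to be the main obstacle, since it requires a careful deformation lemma near $x$ and a choice of $N$ containing no other critical points at level $c$. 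Here I would use that $f$ is Morse, so its critical points are isolated, and $M$ is compact.

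The second step is the local computation. Since $x$ is nondegenerate, the Morse lemma gives coordinates $(u,v)\in\bb{R}^k\times\bb{R}^{n-k}$ near $x$, with $k=i(x)$, in which
\begin{equation*}
f = c - |u|^2 + |v|^2 .
\end{equation*}
Take $N$ to be a small coordinate ball, or a product of balls. Then $N\cap\{f<c\}=\{|u|>|v|\}\cap N$.

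\emph{Case $k=0$.} The set $\{|u|>|v|\}$ is empty, so $N\cap\{f<c\}=\emptyset$. This contradicts the mountain pass property; indeed $x$ would be a strict local minimum.

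\emph{Case $k\ge 2$.} The set $\{|u|>|v|\}\cap N$ deformation retracts onto a sphere $\{|u|=r,\ v=0\}\cong S^{k-1}$. The retraction is explicit: first scale $v$ to $0$, then radially move $u$ to $|u|=r$. Since $S^{k-1}$ is path connected for $k\ge 2$, the set $N\cap\{f<c\}$ is path connected, again contradicting the mountain pass property.

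\emph{Case $k=1$.} The set $N\cap\{f<c\}$ splits into the two components $u>|v|$ and $u<-|v|$, consistent with the definition.

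The cases above exclude $k=0$ and $k\ge 2$, so $i(x)=1$.
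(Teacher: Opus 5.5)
Your Morse-chart computation is correct, and it is exactly what the paper's one-line proof (``a computation in the coordinates provided by the Morse lemma'') intends. With the local definition you adopt, the cases $k=0$ and $k\ge 2$ are ruled out and $k=1$ is consistent, so $i(x)=1$. The gap is in your first step, which claims more than the minimax characterization can give.

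The mountain pass theorem only produces \emph{some} critical point at level $c$, and not every critical point at that level has the local property. Here is a counterexample on $S^2$. Take a Morse function with two minima $p_0,p_1$ at height $0$, a saddle $s_1$ joining their basins at height $1$, and a third minimum $p_2$ elsewhere at height exactly $1$. Complete it with a second saddle at height $2$ and a maximum at height $3$; the Euler characteristic count $3-2+1=2$ is consistent. Then $c=1$ and the critical set at level $c$ is $K_c=\{s_1,p_2\}$. So $p_2$ is a nondegenerate critical point attaining $c$, yet it has index $0$. The paper's loose definition (``the critical point $x_2$ from'' the minimax theorem) does not exclude it.

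Your rerouting argument fails at exactly this point. The deformation ``away from $N$'' can push a path below level $c$ only if $N$ contains \emph{all} of $K_c$. You asked for the opposite, that $N$ contain no other critical points at level $c$. In the example, no modification near $p_2$ lowers paths passing through $s_1$.

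If you run the argument over all of $K_c$ at once (it is finite, since $f$ is Morse and $M$ compact), it proves Hofer's actual statement: \emph{some} point of $K_c$ is of mountain pass type. You therefore have two consistent options. Either take the local condition as the definition of ``mountain pass type'', in which case step 1 is unnecessary and step 2 alone is the complete proof, matching the paper. Or weaken the conclusion to ``the minimax level $c$ contains a critical point of index $1$.'' As written, step 1 asserts something false for a given critical point at level $c$.
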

		This is proved via a computation in the coordinates provided by the Morse lemma \cite{Milnor:morseindex}. 
		
		\subsection{Discrete Sturm-Liouville Theory}
		In the calculus of variations, the second variation is a Sturm-Liouville operator on the space of variations about a critical point, and one may relate the index of the second variation at the critical point to the number of zeros of solutions to this Sturm-Liouville problem; this is in essence the Morse index theorem \cite{Duistermaat:MorseIndex}. Our goal is to analyze a finite-dimensional variational problem in a similar fashion. Instead of a Sturm-Liouville problem, we will regard the second variation at the critical point as a second-order difference equation. The purpose of this section is to gather necessary results to relate sign changes of a solution to this difference equation to the index of the second variation. Most of these results are known within the theory of Jacobi operators \cite{Teschl:JacobiOperators}, and Bialy and Tsodikovich have set up a similar framework to analyze specifically local maxima of an alternative generating function \cite{BialyTsod}, but for clarity we will prove what we need from scratch.
		
		First, let us fix some definitions and notation. We will restrict our attention to boundary value problems for second order difference systems on finite sequences; let $V =\bb{R}^{N+1}$ be our space of length-$(N+1)$ real valued sequences. We will identify the sequence $v^i, \ i = 1,...,N+1$ with the vector $v$, and interpret $v$ as a sequence or vector in context as needed. 
		
		Let $\mathcal{J}: V \to V$ be an operator given by 
		\begin{align}\label{secondDifferenceOperator}
			(\mathcal{J}(v))^i = \begin{cases}a_1 v^1 + b_1 v^2, \ i = 1\\
				b_{i-1}v^{i-1} + a_i v^i + b_{i} v^{i+1}, \ i = 2,...,N\\
				b_{N}v^N + a_{N+1}v^{N+1}, \ i= N+1
			\end{cases}
		\end{align}
		with $a_i, b_i \in \bb{R}$ and $b_i < 0$. To be explicit about the connection to Sturm-Liouville operators, we note that if $\Delta^+$ and $\Delta^-$ denote forward and backwards difference operators
		\begin{align}
			\Delta^+(v)^i = v^{i+1}-v^i\\
			\Delta^-(v)^i = v^i - v^{i-1}
		\end{align}
		then we may write
		\begin{align}\mathcal{J}(v)^i = \begin{cases}
				b_1\Delta^+(v)^1 + (b_1+ a_1)v^1, \ i=1\\
				\Delta^-(b_i\Delta^+(v)^i) + (b_i + b_{i-1} + a_i)v^i, \  i= 2,...,N, \text{ or }\\
				\Delta^+(b_{i-1}\Delta^-(v)^i) + (b_i + b_{i-1} + a_i)v^i, i = 2,...,N\\
				-b_N\Delta^-(v)^{N+1} + (b_N + a_{N+1})v^{N+1}, \ i = n+1
			\end{cases}
		\end{align}   
		The boundary terms $\mathcal{J}(v)^1$ and $\mathcal{J}(v)^{N+1}$ are first-order differences which arise as a consequence of working with finite sequences, and the interior terms are second-order differences. We may identify the difference operator $\mathcal{J}$ with a symmetric, tridiagonal matrix
		\begin{equation}
			J = \m{a_1 & b_1 & 0 &...&0\\b_1&a_2&\ddots& \ &\vdots\\ 0&\ddots&\ddots&\ddots&0\\ \vdots & \ & \ddots & \ddots &b_N\\0&...&0&b_N&a_{N+1}},
		\end{equation}
		so that $\mathcal{J}(v^1,...,v^{N+1}) = Jv$. Such matrices are referred to as Jacobi matrices \cite{Teschl:JacobiOperators}.
		
		Before discussing boundary value problems, we note that given any $v^i, v^{i+1}$, we can solve the homogeneous initial value problem 
		\begin{equation}
			\mathcal{J}(v) = (c^1,0,...,0,c^2)
		\end{equation}
		via the forward recurrence 
		\begin{equation}\label{forwardRecurrence} v^{i+1} =\frac{-1}{b_i}(a_i v^i + b_{i-1}v^{i-1});
		\end{equation}
		or backward recurrence
		\begin{equation}\label{backwardRecurrence} v^{i-1} =\frac{-1}{b_{i-1}}(a_iv^i + b_iv^{i+1});
		\end{equation}
		Note that we have left $c^1$ and $c^2$ as variables even though we refer to a homogeneous initial value problem; they result from the fact that our operator $\mathcal{J}$ is a first-order difference at the endpoints. If consecutive values $v^i$ and $v^{i+1}$ for any $i = 1,...,N-1$ are specified, they fix the free variables $c^1$ and $c^2$. That is, providing consecutive values is analogous to imposing initial conditions in a second-order differential equation. Similarly, specifying boundary values $v^1$ and $v_{N+1}$ while leaving $c_1$ and $c_2$ free is analogous to imposing Dirichlet boundary conditions. Specifying $c_1$ and $c_2$ themselves is analogous to imposing Robin boundary conditions. 

We are particularly interested in periodic boundary conditions of the form 
\begin{align}\label{periodicBCs}
	v^1 = v^{N+1}.
\end{align}
Since the system is linear, any solution to this problem is a combination of solutions to the Dirichlet problems with $v^1 = v^{N+1} = 0$ and $v^1 = v^{N+1} = 1.$

Now we will set up a discrete Sturm-Liouville eigenvalue problem. We introduce an operator $\mathcal{Q}: V \to V$ defined by 
\begin{align} (\mathcal{Q}(v))^i =  \begin{cases} \frac{q}{2} v^i, i = 1, N+1\\
		qv^i, i = 2,...,N\end{cases},
\end{align} 
whose corresponding matrix $Q$ is a diagonal matrix with entries $(q/2,q,...,q,q/2)$ on the diagonal, for some $q>0$. We then introduce an eigenvalue problem 
\begin{align}\label{discreteSLEP}(\mathcal{J} - \mu\mathcal{Q})(v)^i = \begin{cases}
		c^1, \ i = 1\\
		0, \ i = 2,...,N\\
		c^2, \ i = N+1
	\end{cases} 
\end{align}
where $c^1$ and $c^2$ are free variables. We make two additional comments: First, the factors of 1/2 in the first and last entries of the operator $\mathcal{Q}$ make our use of the term ``eigenvalue'' different from the usual linear algebra sense, but allow us to extend our eigenvalue problem periodically to longer sequences in later applications. Second, it appears we have two eigen-parameters in our problem, $\mu$ and $q$; in practice, $q$ is fixed prior to setting up the eigenvalue problem, and $\mu$ will be regarded as the eigenvalue parameter. 

To conclude, we summarize all the above information into the following definition of a discrete Sturm-Liouville eigenvalue problem. 
\begin{dfn} We will call a second-order difference system \cref{discreteSLEP} with appropriate (\emph{e.g.} \cref{periodicBCs}) boundary conditions a \emph{discrete Sturm-Liouville problem}.
\end{dfn}

Our first result is a discrete analog to the Sturm-Picone theorem; we want to compare the number of sign changes in the entries of eigenvectors with different eigenvalues. We identify $v$ with the piecewise linear function $v: [1,N+1] \to \bb{R}$ which interpolates between its entries, and prove an interlacing result for the zeroes of these functions. A sign change between consecutive nonzero entries corresponds to a zero between those entries. Moreover, any zero entry of $v$ must occur between entries of opposite sign; supposing $v^i =0$, the recurrence \cref{forwardRecurrence} tells us $v^{i+1} = -\frac{b_{i-1}}{b_i} v^{i-1}$. So zeroes always correspond to a change in sign of the adjacent entries of the vector. 

\begin{prop}\label{discreteSturmPicone}Suppose $v_{i}$ and $v_{j}$  respectively solve
	\begin{equation} (\mathcal{J} - \mu_j\mathcal{Q})(v_j) = (c^1_j,0,...,0,c^2_j), \ (\mathcal{J} - \mu_i\mathcal{Q})(v_i) = (c^1_i,0,...,c^2_i)
	\end{equation}
	or equivalently, in matrix form,
	\begin{equation}\label{matrixEigenvalueProblem} (J- \mu_jQ)v_j = (c^1_j,0,...,0,c^2_j)^T, \ (J - \mu_iQ)(v_i) = (c^1_i,0,...,0,c^2_i)^T,
	\end{equation}
	with $\mu_i < \mu_j$, and $c^{1,2}_{i,j}$ are variables. Then there is a zero of $v_j$ between consecutive zeros of $v_i$.
\end{prop}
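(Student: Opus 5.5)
We argue by contradiction, using a discrete Wronskian (Green's identity / summation by parts) adapted to the piecewise linear interpolation. Suppose the interpolant of $v_i$ has consecutive zeros at real points $\alpha < \beta$ in $[1,N+1]$, and suppose $v_j$ has no zero in $[\alpha,\beta]$. After multiplying by $-1$ if needed, we may assume $v_i > 0$ on $(\alpha,\beta)$ and $v_j > 0$ on $[\alpha,\beta]$. Let $m,\dots,M$ be the integer indices lying in $(\alpha,\beta)$, so that $\alpha\in[m-1,m)$ and $\beta\in(M,M+1]$.

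\textbf{Step 1 (discrete Wronskian).} We define the Casoratian
\[
W^k = b_k\bigl(v_i^k v_j^{k+1} - v_i^{k+1} v_j^k\bigr).
\]
For interior indices $2\le k\le N$ we multiply the $k$-th equation of $(J-\mu_iQ)v_i=0$ by $v_j^k$ and the $k$-th equation of $(J-\mu_jQ)v_j=0$ by $v_i^k$, then subtract. The $a_k$ terms cancel and the $b$ terms telescope, giving
\[
W^{k-1}-W^{k} = (\mu_j-\mu_i)\,q\,v_i^kv_j^k .
\]
At the endpoints $k=1$ or $k=N+1$ the equations carry the free terms $c$, so we will only use interior indices $k$. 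If $\alpha$ or $\beta$ lies at an endpoint, the case needs separate care; in that case the zero is an actual entry, $v_i^1=0$ or $v_i^{N+1}=0$. Summing over $k=m,\dots,M$ yields
\[
W^{m-1}-W^{M} = (\mu_j-\mu_i)\,q\sum_{k=m}^{M} v_i^k v_j^k > 0 ,
\]
since every term in the sum is positive. When the summation range touches index $1$ or $N+1$ we only sum interior equations, adjusting $m,M$ accordingly.

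\textbf{Step 2 (sign of the boundary Wronskians).} We show $W^{m-1}\le 0$ and $W^M\ge 0$, which contradicts Step 1. At the right end, $v_i^M>0\ge v_i^{M+1}$ and $v_j^M,v_j^{M+1}>0$. Write $\beta = M+t$ with $t\in(0,1]$. Linearity of the interpolant gives $v_i^M(1-t)+v_i^{M+1}t=0$. Then $v_i^Mv_j^{M+1}-v_i^{M+1}v_j^M = v_i^M\bigl(v_j^{M+1}+\tfrac{1-t}{t}v_j^M\bigr)>0$, and multiplying by $b_M<0$ gives $W^M<0$.

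This sign is the opposite of what was needed, so the orientation must be checked carefully. Redoing the bookkeeping, the telescoping sign gives $W^{m-1}-W^M$ with $W^{m-1}>0$ and $W^M<0$ forced by positivity: at the left end, $v_i^{m-1}\le0<v_i^m$ gives $v_i^{m-1}v_j^m-v_i^mv_j^{m-1}<0$, hence $W^{m-1}>0$. Then $W^{m-1}-W^M>0$ is consistent and yields no contradiction. The main obstacle is therefore fixing the sign convention correctly, and we expect the correct computation to produce $W^{m-1}-W^M=(\mu_i-\mu_j)q\sum v_iv_j$. Explicitly, $(J-\mu Q)v=0$ reads $b_{k-1}v^{k-1}+b_kv^{k+1}=-(a_k-\mu q)v^k$. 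Multiplying and subtracting gives $b_{k-1}(v_i^{k-1}v_j^k-v_j^{k-1}v_i^k)+b_k(v_i^{k+1}v_j^k-v_j^{k+1}v_i^k)=(\mu_i-\mu_j)qv_i^kv_j^k$, that is, $W^{k-1}-W^k=(\mu_i-\mu_j)q\,v_i^kv_j^k<0$. Summing, the left-hand side $W^{m-1}-W^M$ is positive by the sign analysis above, while the right-hand side is negative, which is the desired contradiction. In the proof we will carry out this sign computation carefully, first thing.

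\textbf{Step 3 (degenerate cases).} If a consecutive zero pair has no integer index strictly between them, then $\alpha,\beta\in(k,k+1)$ for some $k$. This is impossible, because the interpolant is linear on $[k,k+1]$ and is not identically zero there, by the recurrence and the nontriviality of $v_i$. If $\alpha$ or $\beta$ coincides with an entry at index $1$ or $N+1$, we include that index in the Wronskian at the boundary rather than summing the endpoint equation. Finally, we allow $v_j$ to vanish at $\alpha$ or $\beta$ via weak inequalities: the strict inequality in the sum still gives a contradiction, so $v_j$ must in fact have a zero in the open interval, or possibly at its ends, which is what the statement requires.
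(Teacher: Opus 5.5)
Your argument, once the sign is fixed as in Step 2 to $W^{k-1}-W^{k}=(\mu_i-\mu_j)q\,v_i^kv_j^k$, is correct and is essentially the paper's proof: summing over $k=m,\dots,M$ reproduces the paper's block identity $(v_i^{k,l})^T(J-\mu_jQ)^{k,l}v_j^{k,l}-(v_j^{k,l})^T(J-\mu_iQ)^{k,l}v_i^{k,l}=(\mu_i-\mu_j)(v_i^{k,l})^TQ^{k,l}v_j^{k,l}$, with $W^{m-1}$ and $-W^{M}$ as its two boundary terms. One justification needs tidying: $v_j^{M+1}>0$ (resp.\ $v_j^{m-1}>0$) is not available when the zero of $v_i$ lies strictly inside a cell, but your own interpolation computation shows the bracket equals $\tfrac{v_i^M}{t}v_j(\beta)$ (resp.\ $-\tfrac{v_i^m}{1-s}v_j(\alpha)$ with $\alpha=m-1+s$), so only the sign of $v_j$ at the zeros of $v_i$ matters, and this settles in one line the situations the paper handles separately in its Cases 1, 2a and 2b.
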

\begin{proof} Without loss of generality, suppose $v_i^{k-1}, v_i^{l+1} \leq 0$ and $v_i^k,...,v_i^l >0,$ i.e. the zeros of $v_i$ occur on the intervals $[k-1,k)$ and $(l,l+1].$ Toward a contradiction, suppose $v_j$ has no zero in between the consecutive zeros of $v_i$, and without loss of generality take $v_j^{k},...,v_j^{l+1} >0.$ Denote by $v_{i}^{k,l}$ and $v_j^{k,l}$ the subvector consisting of the $k$ through $l$ entries of $v_{i}$ and $v_j$ respectively, and $(J -\mu_{i}E)^{k,l}$ and $(J -\mu_{j}E)^{k,l}$ the corresponding block of the matrices $(J - \mu_{i}E)$ and $(J -\mu_{j}E)$ resepctively. 
	Since $J$ is tridiagonal and $v_i, v_j$ solve \cref{matrixEigenvalueProblem}, we can compute the following equations:
	\begin{equation}\begin{cases}
			(v_i^{k,l})^T(J - \mu_jQ)^{k,l}v_j^{k,l} = - v_i^k b_{k-1} v_j^{k-1} -  v_i^l b_l v_j^{l+1} \\
			(v_j^{k,l})^T(J - \mu_iQ)^{k,l}v_i^{k,l} = - v_j^k b_{k-1} v_i^{k-1} -  v_j^l b_l v_i^{l+1}.
	\end{cases}\end{equation}
	Using symmetry and taking the difference of these equations, we get 
	\begin{align}(v_i^{k,l})^T(\mu_i - \mu_j)Q^{k,l}v_j^{k,l} = b_{k-1}&(v_j^k v_i^{k-1} -v_i^kv_j^{k-1}) \\
		&+b_l(v_j^l v_i^{l+1} - v_i^l v_j^{l+1}). 
	\end{align}
	The left hand side of this equation is strictly negative. We will consider the first term $b_{k-1}(v_j^k v_i^{k-1} -v_i^kv_j^{k-1}),$ recalling from \cref{secondDifferenceOperator} that all the sub- and superdiagonal entries $b_k$ of $J$ are strictly negative.  
	\begin{itemize}
		\item{Case 1:} Suppose $v_i^{k-1} = 0$. Then we must have $v_j^{k-1}\geq 0$, since by assumption there is no zero of $v_j$ between the consecutive zeros of $v_i$. Then 
		\begin{equation}
			b_{k-1}(v_j^k v_i^{k-1} -v_i^kv_j^{k-1}) = -b_{k-1}v_i^kv_j^{k-1} \geq 0.
		\end{equation}
		\item{Case 2a:} Suppose $v_i^{k-1}<0$ and $v_j^{k-1}>0$. Then we immediately have
		\begin{equation}
			b_{k-1}(v_j^k v_i^{k-1} -v_i^kv_j^{k-1})>0.
		\end{equation}
		\item{Case 2b:} Suppose $v_i^{k-1}<0$ and $v_j^{k-1}<0$, but the zero of $v_j$ is less than or equal to that of $v_i$. More precisely, $t_j\leq t_i$, where
		\begin{equation}
			t_j = \frac{-v_j^{k-1}}{v_j^k - v_j^{k-1}} \textup{ and } t_i = \frac{-v_i^{k-1}}{v_i^k - v_i^{k-1}}. 
		\end{equation}
		Taking their difference, we have
		\begin{align} 
			0 &\geq t_j - t_i = \frac{-v_j^{k-1}}{v_j^k - v_j^{k-1}} - \frac{-v_i^{k-1}}{v_i^k - v_i^{k-1}}\\
			&\geq \frac{v_j^k v_i^{k-1} -v_i^kv_j^{k-1}}{(v_j^k - v_j^{k-1})(v_i^k-v_i^{k-1})}.
		\end{align}
		Noting that the denominator must be positive, we conclude the numerator is negative, and so 
		\begin{equation}
			b_{k-1}(v_j^k v_i^{k-1} -v_i^kv_j^{k-1}) \geq 0. 
		\end{equation}
	\end{itemize}
	By analogous case work, $b_l(v_j^l v_i^{l+1} - v_i^l v_j^{l+1})\geq 0$, contradicting our equality above.
	\renewcommand{\qedsymbol}{\lightning}
\end{proof}

\noindent Note that this theorem is not particular to a boundary value problem, it pertains to any solutions of the second order difference system $(J-\mu Q) v = (c^1,0,...,0,c^2)^T$.  Applied to the boundary value problem with Dirichlet boundary conditions, we can explicitly count the number of zeros of the eigenvectors, with the help of a few additional facts. 

\begin{prop}\label{simpleEigenvalues} The eigenvalues of the Sturm-Liouville problem with vanishing Dirichlet boundary conditions are simple. 
\end{prop}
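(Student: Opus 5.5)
The plan is to reduce simplicity of eigenvalues to uniqueness for the initial value problem of the three-term recurrence. Suppose $\mu$ is an eigenvalue of the Dirichlet problem, so there is a nonzero $v$ with $v^1 = v^{N+1} = 0$ and $(\mathcal{J}-\mu\mathcal{Q})(v)^i = 0$ for $i = 2,\dots,N$. The quantities $c^1, c^2$ are left free, so only the interior equations constrain $v$. I will show that the solution space of this problem, for fixed $\mu$, is at most one-dimensional.

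First observe that $\mathcal{J}-\mu\mathcal{Q}$ has the same tridiagonal form as $\mathcal{J}$. Its off-diagonal entries are still $b_i<0$, and only the diagonal changes, with $a_i$ replaced by $a_i - \mu q$ (or $a_i-\mu q/2$ at the ends). So the forward recurrence \cref{forwardRecurrence} applies with $a_i$ replaced by $a_i-\mu q$:
\begin{equation*}
v^{i+1} = \frac{-1}{b_i}\bigl((a_i-\mu q)v^i + b_{i-1}v^{i-1}\bigr), \quad i=2,\dots,N.
\end{equation*}
Because $b_i\neq 0$, this is well defined. It shows that any solution is determined entirely by the pair $(v^1,v^2)$. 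With the Dirichlet condition $v^1=0$ imposed, the solution is determined by the single value $v^2$, and the map $v^2\mapsto v$ is linear. Hence the space of solutions with $v^1=0$ is at most one-dimensional, and it is spanned by the solution with $v^1=0$, $v^2=1$. The further condition $v^{N+1}=0$ either picks out this line or forces $v^2=0$, in which case the recurrence gives $v\equiv 0$. So each eigenspace is exactly one-dimensional.

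Since the paper's notion of ``eigenvalue'' is a generalized one, I will also verify that geometric simplicity is the intended meaning. One might also want algebraic simplicity. For this I would note that, after removing the endpoint entries, the Dirichlet problem is the symmetric generalized problem $(J-\mu Q)^{2,N}v^{2,N}=0$. Here $Q^{2,N}=qI$ is positive definite, so the problem is equivalent to the ordinary symmetric eigenproblem for $q^{-1}J^{2,N}$. For a symmetric matrix, algebraic and geometric multiplicities coincide, so geometric simplicity suffices.

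The only delicate point is this bookkeeping: making sure the free $c^1,c^2$ really decouple, so that the endpoint rows impose nothing. This holds because $c^1,c^2$ are unconstrained by definition. Beyond that, the argument is a routine consequence of $b_i\neq0$.
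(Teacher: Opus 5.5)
Your proof is correct and is essentially the paper's argument. The paper shows that a Dirichlet solution with $c^1=0$ must vanish, then cancels $c^1$ in the combination $v-(c^1/c'^1)v'$ of two eigenvectors; this is the same as your observation that a solution is determined linearly by $v^2$, because $c^1=b_1v^2$ when $v^1=0$. Your version is a little more direct, and your reduction to the symmetric matrix $q^{-1}J^{2,N}$ spells out the symmetry remark the paper uses to equate algebraic and geometric multiplicity.
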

\begin{proof} First, we claim the only solution to the Sturm-Liouville problem with vanishing Dirichlet boundary conditions satisfying $c^1 = 0$ or $c^2 = 0$ is $v = 0$; if either of $c^1$ or $c^2$ is zero, so is the other. Consider the homogeneous initial value problem $(\mathcal{J} - \mu\mathcal{Q}) = (c^1,0,...,0,c^2)$. If $c^1 = 0$, and we know from the vanishing Dirichlet boundary conditions that $v^1 = 0$, then $v^2 = 0.$ Using these as initial values in the recurrence relation \cref{forwardRecurrence}, $v^i = 0$ for all $i$, and then $c^2 = 0$. If $c^2 = 0$, we can use the exact same argument using the backward recurrence \cref{backwardRecurrence}.

	Now suppose there is a non-simple eigenvalue. Since the restriction of $J -\mu Q$ to the space of vectors with vanishing Dirichlet boundary conditions remains symmetric, there must be as many linearly independent eigenvalues as the multiplicity of $\mu$. Suppose $v$ and $v'$ are two linearly independent eigenvectors,
	\begin{align} (J -\mu Q)v &= (c^1,0,...,0,c^2)^T\\
		(J -\mu Q)v' &= (c'_1,0,...,0,c'_2)^T.
	\end{align}
	Now we compute
	\begin{equation} 
		(J-\mu Q)\left(v - \frac{c^1}{c'^1}v'\right) = \left(0,...,0,c^2- \frac{c^1}{c'^1}c'^2\right).
	\end{equation}
	However, $v - \frac{c^1}{c'^1}v'$ cannot be zero if $v$ and $v'$ are linearly independent, and so this equation contradicts our previous claim. \renewcommand{\qedsymbol}{\lightning}
\end{proof}

\begin{prop}\label{zeroCountSL} Let the eigenvalues of the Sturm-Liouville boundary value problem with vanishing Dirichlet boundary conditions be ordered by increasing eigenvalue. There is exactly one zero of the piecewise linearly interpolated eigenvector $v_{i+1}$ in between each pair of zeros of $v_i$, and $v_{1}$ has no zeros.
	Equivalently, the $i^{th}$ eigenvector has $i-1$ sign changes. 
\end{prop}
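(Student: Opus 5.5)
We set up the Dirichlet problem as a symmetric eigenvalue problem on the interior. With $v^1=v^{N+1}=0$ fixed, the rows $i=2,\dots,N$ of \cref{discreteSLEP} say exactly that the interior vector $w=(v^2,\dots,v^N)$ satisfies $J'w=\mu q w$. Here $J'$ is the $(N-1)\times(N-1)$ principal tridiagonal block of $J$, whose off-diagonal entries $b_i$ are all nonzero. The first and last rows of \cref{discreteSLEP} only determine $c^1=b_1v^2$ and $c^2=b_Nv^N$. So the eigenvalues are those of the symmetric matrix $J'/q$. By \cref{simpleEigenvalues} there are exactly $N-1$ of them, all distinct, and we order them $\mu_1<\mu_2<\dots<\mu_{N-1}$. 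We regard the piecewise linear interpolant of $v_k$ on $[1,N+1]$ as having zeros at $t=1$ and $t=N+1$ from the boundary conditions. Counting ``zeros'' of the interpolant is the same as counting sign changes of $v_k$ plus the two endpoint zeros: interior zero entries are always flanked by opposite signs, as noted before \cref{discreteSturmPicone}, and two consecutive zero entries would force $v\equiv 0$ by the recurrence \cref{forwardRecurrence}.

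Step one: show $v_k$ has at most $k-1$ sign changes. Suppose instead that $v_k$ has at least $k$ sign changes. Then $[1,N+1]$ splits into at least $k+1$ nodal intervals on which the interpolant has constant sign. On each interval, truncate $v_k$ to its entries there, zero elsewhere, and get vectors $u_1,\dots,u_{k+1}$ with disjoint supports. The computation in the proof of \cref{discreteSturmPicone} shows that $u_m^T(J-\mu_kQ)u_m$ reduces to boundary terms of the form $-b\,v^a v^{a\pm1}$ at the nodal endpoints. By the sign and case analysis there, these terms are $\le 0$, and strictly negative when the neighbouring entry is nonzero with opposite sign. Cross terms $u_m^T(J-\mu_kQ)u_{m'}$ occur only for adjacent intervals and also reduce to $b\,v^av^{a+1}$ with entries of opposite sign, hence $\le0$. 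Choosing coefficients of sign making all cross terms contribute non-positively, we get that $J'-\mu_k qI$ is negative semidefinite on a $(k+1)$-dimensional space. By the Courant--Fischer minimax this forces $\mu_{k+1}\le\mu_k$, contradicting simplicity and ordering. A cleaner alternative for this step is to argue directly by interlacing, which is the route we take in step two.

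Step two: build the interlacing by induction using \cref{discreteSturmPicone}. Since $\mu_k<\mu_{k+1}$, \cref{discreteSturmPicone} gives at least one zero of $v_{k+1}$ strictly between each pair of consecutive zeros of $v_k$, counting the boundary zeros at $t=1,N+1$. Thus if $v_k$ has $z_k$ zeros (endpoints included), then $v_{k+1}$ has at least $z_k+1$ of them, so $z_{k+1}\ge z_k+1$. Since $z_1\ge 2$, we get $z_k\ge k+1$, i.e. $v_k$ has at least $k-1$ sign changes. For the upper bound, note that $v_{N-1}$ has only $N-1$ interior entries, so its sign changes number at most $N-2$. Hence $z_{N-1}\le N$, and the chain $z_1\ge 2$, $z_{k+1}\ge z_k+1$, $z_{N-1}\le N$ forces $z_k=k+1$ for every $k$. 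This gives exactly $k-1$ sign changes, no interior zeros for $v_1$, and exactly one zero of $v_{k+1}$ between each pair of consecutive zeros of $v_k$. Two zeros of $v_{k+1}$ in one gap would push $z_{k+1}$ above $z_k+1$.

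The main obstacle is applying \cref{discreteSturmPicone} to the endpoint gaps and to interpolant zeros that fall strictly between integers. The endpoint zeros of $v_k$ are forced boundary zeros, and the proposition's case analysis must cover the situations where $v_i^{k-1}=0$ at $t=1$ or $t=N+1$. Case 1 of that proof handles exactly this, giving the boundary term the right sign. We also need that the zero of $v_{k+1}$ produced in each gap is strictly interior to the gap, so that zeros in distinct gaps are distinct. This follows because $v_{k+1}$ and $v_k$ cannot share a zero at an integer node without the boundary terms vanishing, which the contradiction argument excludes.
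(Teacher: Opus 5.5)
Your step two is essentially the paper's proof: \cref{discreteSturmPicone} gives at least one new zero of $v_{k+1}$ in each gap between consecutive zeros of $v_k$, and the limited room for sign changes in $v_{N-1}$ forces every inequality in the chain to be an equality; your endpoint-inclusive count $z_{N-1}\le N$ (at most $N-2$ interior sign changes) is the sharp form of the bound the paper uses. Step one is not needed and should be dropped, because as written it is wrong: the adjacent cross terms $b\,v^av^{a+1}$ are $\ge 0$, not $\le 0$, and fixing the signs of the coefficients only controls a cone, not a subspace. If you wanted that route, the correct tool is the identity $\bigl(\sum_m c_mu_m\bigr)^T(J'-\mu_kqI)\bigl(\sum_m c_mu_m\bigr)=-\sum_m e_m(c_m-c_{m+1})^2$, where $e_m\ge 0$ is the cross term between the $m$-th and $(m+1)$-th nodal pieces.
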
 
\begin{proof} The eigenvectors $v_i$ are vectors of length $N+1$ with vanishing first and last entry, and other entries not all zero. Such vectors can have at most $N-1$ zeroes, not counting the first and last entries. Because the restriction of $J -\mu Q$ to the space of vectors with vanishing Dirichlet boundary conditions is symmetric, we have an independent set of $N-1$ eigenvectors spanning the space of vectors with vanishing Dirichlet boundary conditions. From \cref{discreteSturmPicone}, and \cref{simpleEigenvalues}, we know there is at least one zero of $v_{i+1}$ in between each zero of $v_i$, so the $(i+1)^{th}$ eigenvector has at least one more zero than the $i^{th}$. 
	
	Hence, the least number of zeros $v_i$ may have is $i-1$; if there is exactly one zero of $v_{i+1}$ in between each zero of $v_i$, and the first eigenvector $v_{1}$ has no zeros, the $i^{th}$ eigenvector has $i-1$ zeros. This is also the most number of zeros $v_i$ may have, if any eigenvector $v_{i+1}$ has more than one zero in between consecutive sign changes of $v^i$, or $v_{1}$ has any zeros, then all subsequent $v_i$ have at least $i$ zeros. Then the $(N-1)^{th}$ eigenvector has at least $N$ zeros, which is impossible. \renewcommand{\qedsymbol}{\lightning}
\end{proof}

\section{Morse Theory for Billiards} 
\subsection{The Hessian of the Length Functional}
Previously, we remarked that the notion of a critical point depends on the boundary relation we impose on our discrete Lagrangian problem. We can think of this either intrinsically, in the sense that we may think of $\mathcal{C}^N_R$ as an abstract manifold and look for critical points of a Lagrangian $L:\mathcal{C}^N_R \to \bb{R}$, or extrinsically, in the sense that we can think of $\mathcal{C}^N_R$ as a submanifold of $\mathcal{C}^N$, and look for critical points of $L:\mathcal{C}^N \to \bb{R}$ restricted to $\mathcal{C}^N_R$. Similarly, we can view the Hessian intrinisically, as a bilinear form on the abstract tangent space $T_x\mathcal{C}^N_R$, or extrinsically, as the restriction of the Hessian on $T_x\mathcal{C}^N$ to some subspace $T_x\mathcal{C}^N_R$. We will adopt the latter viewpoint, which makes it easier to compute indices with respect to different boundary conditions without having to set up intrinsic coordinates for each $\mathcal{C}^N_R$, and makes it easier to use results such as \cref{indexSubspace} to compute the index by restricting to convenient subspaces. In this case, we will mentally parenthesize $T_x\mathcal{C}^N_R$ as $(T_x\mathcal{C}^N)_R$, that is, we will think of it as a subspace of the tangent space, rather than a tangent space to a submanifold. 
We denote tangent vectors in $T_x\mathcal{C}^N$ as variations $\delta x$, with entries $\delta x^i$ corresponding to components in the natural basis $\frac{\partial}{\partial x^i}$. Tangent vectors in $T_x\mathcal{C}^N_R$ will be written extrinsically as vectors in $T_x\mathcal{C}^N$, with whatever restrictions are required of vectors in $T_x\mathcal{C}^N_R$ specified. 
\begin{dfn} The Morse index of a critical point with respect to variations in $T_x\mathcal{C}^N_R$ is denoted $i_R(x)$. We reserve the notation $i_0(x)$ for the Morse index with respect to variations in $T_x\mathcal{C}^N_0$ and the notation $i_\Delta(x)$ for the Morse index with respect to variations in $T_x\mathcal{C}^N_\Delta$.   
\end{dfn} 
With this in mind, the Hessian of $L$ at a critical point, expressed in coordinates on $T_x\mathcal{C}^N$, is a symmetric tridiagonal matrix, because $L$ is a sum of terms each of which depends only on $x^i$ and $x^{i+1}$. The diagonal entries are given by 
\begin{equation}\label{billiardHessianDiagonals}
	\begin{cases}
		a_1 = \partial_1^2 (-\ell(x^1,x^2))\\
		a_i = \partial_1^2 (-\ell(x^i,x^{i+1})) + \partial_2^2(-\ell(x^{i-1}x^i)), \ i = 2,...,N\\
		a_{N+1} = \partial_2^2(-\ell(x^N,x^{N+1}))
	\end{cases}
\end{equation}
and the sub- and super-diagonal entries are given by 
\begin{equation}\label{billiardHessianOffDiagonals}
	b_i = \partial^2_{12} (-\ell(x^i,x^{i+1}))
\end{equation}

We can calculate expressions for the entries $a_i$ and $b_i$ in terms of physical parameters of the system, namely the incidence angles $\phi^i$, the lengths of $i^{th}$ segments of the trajectory $\ell_i$, and the curvatures of the boundary at each collision point $\kappa_i$. Computing $\partial^2_1(-\ell_i)$ first, we get
\begin{align}
	\partial_1^2(-\ell_i) &= 
	\frac{-\left( \langle -\ddot{\gamma}(x^i),\gamma(x^{i+1})-\gamma(x^i)\rangle - \langle -\dot{\gamma}(x^i),-\dot{\gamma}(x^i)\rangle\right)}{\left(\langle \gamma(x^{i+1})-\gamma(x^{i}), \gamma(x^{i+1})-\gamma(x^{i})\rangle\right)^{1/2}} \\
	& \ \ \ \ \ \ \ \ \ + \frac{\left(\langle -\dot{\gamma}(x^i),\gamma(x^{i+1}) - \gamma(x^i)\rangle\right)^2}{\left(\langle \gamma(x^{i+1})-\gamma(x^{i}), \gamma(x^{i+1})-\gamma(x^{i})\rangle\right)^{3/2}}.
\end{align}
We may identify our factors of $\left(\langle \gamma(x^{i+1})-\gamma(x^{i}), \gamma(x^{i+1})-\gamma(x^{i})\rangle\right)^{1/2}$ with $\ell_i$. We also note that since $\gamma$ is a unit-speed parametrization of the boundary, we have
\begin{equation}
	\langle -\dot{\gamma}(x^i),-\dot{\gamma}(x^i)\rangle = 1
\end{equation}
as well as 
\begin{equation}
	\langle -\ddot{\gamma}(x^i),\gamma(x^{i+1})-\gamma(x^i)\rangle = \kappa_i\ell_i\sin(\phi^i)
\end{equation}
and finally,
\begin{equation}
	\langle -\dot{\gamma}(x^i),\gamma(x^{i+1}) - \gamma(x^i)\rangle = \ell_i\sin(\phi^i).
\end{equation}
Substituting these quantities in, we find
\begin{equation}
	\partial_1^2(-\ell_i) = \kappa_i\sin(\phi^i) - \frac{1}{\ell_i}\sin^2(\phi^i). 
\end{equation}
A nearly identical computation shows
\begin{equation}
	\partial_2^2(-\ell_{i-1}) = \kappa_i\sin(\phi^i) - \frac{1}{\ell_{i-1}}\sin^2(\phi^{i-1}).
\end{equation}
For $i = 2,...,N$, $a_i$ is the sum of these contributions; for $a_1$ and $a_{N+1}$ only one of the contributions is defined, so the $a_i$ are given by

\begin{equation}\label{diagonalEntries} \begin{cases}
		a_1 = \kappa_1\sin(\phi^1) - \frac{\sin^2(\phi^1)}{\ell_1} \\
		a_i = 2\kappa_i\sin(\phi^i) - \frac{\sin^2(\phi^i)}{\ell_i} - \frac{\sin^2(\phi^{i-1})}{\ell_{i-1}} \text{ for } i= 2,...,N\\
		a_N = \kappa_{N+1}\sin(\phi^{N+1}) - \frac{\sin^2(\phi^N)}{\ell_N}. 
	\end{cases}
\end{equation}  
A similar computation for the sub- and super-diagonal entries $b_i$ gives

\begin{align} 
	\partial^2_{21}(-\ell_i) &= 
	\frac{-1}{\ell_i} \sin(\phi^i)\sin(\phi^{i+1}). 
\end{align}
Moreover, since $b_i = \frac{-1}{\ell_i} \sin(\phi^i)\sin(\phi^{i+1})<0$, the Hessian is a Jacobi matrix. 

Our next proposition relates the Morse index to the number of negative eigenvalues of a discrete Sturm-Liouville problem associated to this matrix. It is essentially a discretization of the argument by Duistermaat \cite{Duistermaat:MorseIndex}. To avoid a proliferation of signs, we will again set $S_i = -\ell_i.$

\begin{prop}\label{morseIndexEigenvalueProblem} If $x \in \mathcal{C}^N_R$ is a critical point of $L$, then $i_R(x)$ is equal to the number of eigenvalues $\mu \in (-1,0)$ of the difference system
	\begin{equation}\label{variationalSL}
		\partial^2_{12}S_{i-1}\delta x^{i-1} + (\partial_1^2S_i + \partial_2^2S_{i-1} + 2\mu q) \delta x^i + \partial^2_{21}S_i\delta x^{i+1} = 0, \ i = 2,...,N
	\end{equation}
	subject to boundary conditions

	\begin{equation}\label{variationalBCs}
		\begin{cases}
			(\delta x^1,\delta x^{N+1}) \in T_{(x^1,x^{N+1})}R\\
			\big((\partial_1^2S_1+\mu q)\delta x^1 + \partial^2_{12}S_1\delta x^2, \\
			\ \ \ \ \ \ \ \  \partial^2_{21}S_N\delta x^N + (\partial_2^2S_N+\mu q)\delta x^{N+1}\big) \in(T_{(x^1,x^{N+1})}R)^\perp

		\end{cases}
	\end{equation} 
	for sufficiently large values of the parameter $q$. 
\end{prop}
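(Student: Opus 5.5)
We interpret the difference system \cref{variationalSL} with boundary conditions \cref{variationalBCs} as the generalized eigenvalue problem
\begin{equation}
H_x|_{W}\,\delta x = -2\mu\, Q|_{W}\,\delta x,
\end{equation}
posed on the subspace $W = T_x\mathcal{C}^N_R \subset T_x\mathcal{C}^N$. Here $H_x$ is the Jacobi matrix of the Hessian, with entries \cref{diagonalEntries} and off-diagonal entries $b_i$. $Q$ is the diagonal matrix $\mathrm{diag}(q/2,q,\dots,q,q/2)$, a positive definite form on $T_x\mathcal{C}^N$.

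\textbf{Step 1: identify the eigenvalue problem.} Take $\delta x \in W$ and compute the linear functional $\eta \mapsto \eta^T (H_x + 2\mu Q)\delta x$ on $W$.
\begin{itemize}
\item The interior rows $i=2,\dots,N$ of $(H_x + 2\mu Q)\delta x$ are exactly the left sides of \cref{variationalSL}.
\item The first and last rows are $(\partial_1^2S_1+\mu q)\delta x^1 + \partial^2_{12}S_1\delta x^2$ and $\partial^2_{21}S_N\delta x^N + (\partial_2^2S_N+\mu q)\delta x^{N+1}$, using $2\mu\cdot q/2=\mu q$.
\item Since $W$ leaves the interior components free, requiring $(H_x+2\mu Q)\delta x \perp W$ (Euclidean orthogonality) means two things. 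The interior rows must vanish, which is \cref{variationalSL}. The boundary pair must lie in $(T_{(x^1,x^{N+1})}R)^\perp$, which is \cref{variationalBCs}.
\end{itemize}
So the solutions $\delta x \neq 0$ are precisely the eigenvectors of the pencil $(H_x|_W, Q|_W)$, with eigenvalue $\lambda=-2\mu$.

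\textbf{Step 2: count via Sylvester.} Because $Q|_W$ is positive definite, we can simultaneously diagonalize $H_x|_W$ and $Q|_W$. This gives a $Q$-orthonormal basis of $W$ of generalized eigenvectors, with real eigenvalues $\lambda_1\le\dots\le\lambda_m$, $m=\dim W$. In this basis $H_x|_W$ is $\mathrm{diag}(\lambda_k)$. By Sylvester's law of inertia, the index $i_R(x)$ equals the number of $k$ with $\lambda_k<0$, counted with multiplicity. In terms of $\mu=-\lambda/2$, this is the number of eigenvalues with $\mu>0$.

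\textbf{Step 3: reconcile with the interval $(-1,0)$.} The statement counts $\mu\in(-1,0)$, which seems to be the opposite sign from Step 2. I expect this sign convention to be the main obstacle. The displayed system most likely intends $\lambda=2\mu$ (equivalently, $H_x$ replaced by $-H_x$ or $\mu$ by $-\mu$). I would fix the convention that makes negative-definite directions correspond to negative $\mu$, namely $H_x v = 2\mu Q v$ with $\mu = \lambda/2$, and state it explicitly. Then $i_R(x)$ is the number of eigenvalues $\mu<0$, and what remains is to show they all lie in $(-1,0)$.

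\textbf{Step 4: confine eigenvalues using large $q$.} Scaling $Q$ by $q$ scales the eigenvalues by $1/q$: every $\mu$ has the form $\lambda'/q$, where $\lambda'$ is an eigenvalue of the pencil $(H_x|_W,\mathrm{diag}(1/2,1,\dots,1,1/2)|_W)$. The Rayleigh quotient gives $|\lambda'|\le 2\|H_x\|$. So choosing $q > 2\|H_x\|$ puts every $\mu$ in $(-1,1)$. The negative ones are then exactly those in $(-1,0)$, while the zero eigenvalues (the nullity) and the positive ones are excluded.

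This explains why the statement requires $q$ sufficiently large. The large-$q$ threshold and the explicit choice of sign convention are the only delicate points; the rest is finite-dimensional linear algebra.
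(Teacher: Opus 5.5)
Your proof is correct for the sign-corrected statement, and it gets there by a different mechanism than the paper. The paper takes $q$ large enough that $H_x+Q$ is positive definite and defines the $(H_x+Q)$-self-adjoint operator $E$ by $H_x(\delta x,\cdot)=(H_x+Q)(E\delta x,\cdot)$. It then proves by hand, via injectivity of the projection onto $\Lambda^-$, that $i_R(x)$ equals the number of negative eigenvalues $\lambda$ of $E$. Finally it passes to $\mu=\lambda/(1-\lambda)$, which carries $(-\infty,0)$ onto $(-1,0)$. In that route the interval $(-1,0)$ comes for free, and the bound $\mu>-1$ is just positivity of $H_x+Q$.

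You instead use $Q|_W$ itself as the inner product. Simultaneous diagonalization of the pencil and Sylvester's law then do all the counting, and large $q$ enters only at the end, through a Rayleigh-quotient bound that puts every eigenvalue in $(-1,1)$. Your version is shorter and gives an explicit threshold. It also counts eigenvalues with multiplicity, which matters under periodic boundary conditions, where the solution space for a given $\mu$ can be two-dimensional. The paper's phrase ``the number of $\mu\in(-1,0)$ for which this problem has a solution'' tacitly assumes the eigenvalues are simple.

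Your Step 3 diagnosis is right and not merely cosmetic. The paper's own derivation arrives at $(H_x-\mu Q)(\delta x,\cdot)=0$, but its longhand expansion, and hence the displayed system, carries $+\mu q$ and $+2\mu q$. Read literally, counting $\mu\in(-1,0)$ would give the number of positive directions of $H_x|_W$ rather than $i_R(x)$, so the sign must be flipped exactly as you propose.

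One small slip: with your convention $H_x v=2\mu Qv$ and $Q=q\,\mathrm{diag}(1/2,1,\dots,1,1/2)$, you get $\mu=\lambda'/(2q)$, not $\lambda'/q$. This only improves the bound, so $q>2\|H_x\|$ still suffices.
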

\begin{proof}
	Computing the Hessian of $L(x)$, we have
	\begin{equation}\label{secondVariation}
		\begin{split} 
			H_x&(\delta x, \delta \bar{x}) = (\partial_1^2S_1\delta x^1 + \partial^2_{12}S_1\delta x^2)\delta \bar{x}^1 \\&\sum_{i=2}^N \left( \partial^2_{21}S_{i-1}\delta x^{i-1} + (\partial_1^2S_i + \partial_2^2S_{i-1})\delta x^i + \partial^2_{12}S_i\delta x^{i+1} \right)\delta \bar{x}^i \\& (\partial^2_{21} S_n \delta x^N + \partial_2^2 S_N\delta x^{N+1})\delta \bar{x}^{N+1}. 
		\end{split}
	\end{equation}  
	Next, we define a symmetric bilinear form $Q(\delta x, \delta \bar{x}) =q\delta x^1 \delta \bar{x}^1 + (\sum_{i=2}^{N-1} 2q\delta x^i \delta \bar{x}^i) + q\delta x^N \delta \bar{x}^N$ with $q$ sufficiently large, so that the corresponding matrix $(H_x + Q)$ is symmetric and positive definite. Adopting an intrinsic viewpoint, we define a linear operator $E$ on $T_x\mathcal{C}^N_R$ via
	\begin{equation}H_x(\delta x,\delta \bar{x}) = (H_x + Q)(E\delta x, \delta \bar{x}).
	\end{equation}
	This makes $E$ symmetric, as we may equate
	\begin{align}
		(H_x + Q)(E\delta x, \delta \bar{x}) &= H_x(\delta x,\delta \bar{x})\\
		&=H_x(\delta \bar{x}, \delta x) = (H_x+Q)(E\delta \bar{x}, \delta x) \\
		&=(H_x+Q)(\delta x,E\delta \bar{x}). 
	\end{align}
	Since $E$ is symmetric, it is diagonalizable, and we may take its eigenvectors as a basis. Let $\Lambda^+$ denote the span of the eigenspaces with positive or zero eigenvalues, and $\Lambda^-$ the span of the eigenspaces with strictly negative eigenvalues. Since $(H_x+Q)$ is positive definite, $H_x(\cdot,\cdot) = (H_x + Q)(E\cdot,\cdot)$ is positive semidefinite on $\Lambda^+$ and negative definite on $\Lambda^-$. 
	
	We now claim that the index of $H_x$ is in fact equal to $\dim(\Lambda^-)$, that $\Lambda^-$ is a negative definite subspace of maximal dimension for $H_x$. First, we write $T_x\mathcal{C}^N_R = \Lambda^+ \oplus \Lambda^-$, and claim that for any other subspace $L$ on which $H_x$ is negative definite, the projection to $\Lambda^-$ is injective. Indeed, if there were distinct vectors $\delta x, \ \delta \bar{x}$ in $L$ which had the same image when projected to $\Lambda^-$, their difference would lie in $\Lambda^+$ and $H_x$ would not be negative definite on $L$. So any subspace on which $H_x$ is negative definite has dimension less than or equal to $\dim(\Lambda^-)$.

	Returning to the extrinsic viewpoint, and regarding our bilinear forms and operators as matrices, we find $E\delta x = \lambda \delta x$ if and only if $(H_x - \mu Q)(\delta x, \cdot) = 0$ on $T_x\mathcal{C}^N_R$, where $\mu = \frac{\lambda}{1-\lambda}$. Hence, the dimension of the subspace of $T_x\mathcal{C}_R^N$ on which $H_x$ is negative definite is equal to the number of $\mu \in (-1,0)$ for which this problem has a solution. Writing this out longhand, we have
	\begin{equation}\label{eigenvalueProblem}
		\begin{split} 
			(H_x - \mu Q)&(\delta x, \delta \bar{x}) = ((\partial_1^2S_1 + \mu q)\delta x^1 + \partial^2_{12}S_1\delta x^2)\delta \bar{x}^1 \\&+\sum_{i=2}^N \left( \partial^2_{21}S_{i-1}\delta x^{i-1} + (\partial_1^2S_i + \partial_2^2S_{i-1} + 2\mu q)\delta x^i + \partial^2_{12}S_i\delta x^{i+1}\right)\delta \bar{x}^i \\&+ (\partial^2_{21} S_N \delta x^N + (\partial_2^2 S_N+\mu q)\delta x^{N+1})\delta \bar{x}^{N+1}. 
		\end{split}
	\end{equation}
	In order for this to vanish for all variations $\delta \bar{x} \in T_x\mathcal{C}^N_R$, we must satisfy the discrete analog of a Sturm-Liouville problem \cref{variationalSL} with boundary conditions \cref{variationalBCs}
\end{proof}

\subsection{A Morse Index Theorem for Billiards} The Morse index theorem relates the Morse index with respect to fixed-endpoint variations of a geodesic in a Riemannian manifold to the number of zeros of the Jacobi equation along the geodesic. The solutions of the Jacobi equation, Jacobi variation fields, arise precisely as derivatives of a family of variations through extremals \cite{Milnor:morseindex}. On a surface, the vanishing of the Jacobi equation can be visualized as a point of tangency between the geodesic and the envelope of a family of variations through nearby geodesics. 

Our approach is to work directly with the discrete mechanics problem, using a vector of infinitesimal displacements at each collision the discrete analog of a Jacobi field. We will show that these arise precisely as the derivative of a variation through extremals, and that the index is related to changes in sign (with respect to the orientation of the boundary) of the entries of this vector. We then identify trajectories in the discrete mechanics problem with physical straight-line trajectories in the billiard table. We can thus visualize the index by counting tangencies of a trajectory with the envelope of a variation through nearby trajectories.

\begin{dfn} Variations which solve the difference equation \cref{variationalSL} with $\mu = 0$ are called \emph{Jacobi variations}. 
\end{dfn}

\begin{prop} The orthogonal to $T_x\mathcal{C}^N_0$ in $T_x\mathcal{C}^N$ with respect to the inner product from $H_x$ consists precisely of Jacobi variations. 
\end{prop}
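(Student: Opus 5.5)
Let $W = T_x\mathcal{C}^N_0$. Since fixed-endpoint variations satisfy $\delta x^1 = \delta x^{N+1} = 0$, we have $W = \{\delta \bar{x} \in T_x\mathcal{C}^N : \delta\bar{x}^1 = \delta\bar{x}^{N+1} = 0\}$, the span of $\frac{\partial}{\partial x^2},\dots,\frac{\partial}{\partial x^N}$. The plan is to compute $W^\perp$ directly from the explicit expression \cref{secondVariation} for $H_x$ (i.e.\ \cref{eigenvalueProblem} with $\mu = 0$), and to prove two inclusions: every Jacobi variation is $H_x$-orthogonal to $W$, and every vector $H_x$-orthogonal to $W$ is a Jacobi variation.

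Fix $\delta x \in T_x\mathcal{C}^N$ and write $H_x(\delta x,\delta\bar{x}) = \sum_{i=1}^{N+1}(J\delta x)^i\,\delta\bar{x}^i$, where $J$ is the tridiagonal Hessian matrix with entries \cref{billiardHessianDiagonals} and \cref{billiardHessianOffDiagonals}. For $\delta\bar{x}\in W$ the boundary terms $i=1$ and $i=N+1$ drop out, so
\begin{equation}
H_x(\delta x,\delta\bar{x}) = \sum_{i=2}^N \left(\partial^2_{21}S_{i-1}\delta x^{i-1} + (\partial_1^2S_i+\partial_2^2S_{i-1})\delta x^i + \partial^2_{12}S_i\delta x^{i+1}\right)\delta\bar{x}^i .
\end{equation}

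For the first inclusion: if $\delta x$ solves \cref{variationalSL} with $\mu=0$, each bracket vanishes, so $H_x(\delta x,\cdot)$ is zero on $W$. For the converse: if $H_x(\delta x,\delta\bar{x}) = 0$ for all $\delta\bar{x}\in W$, test against the basis vectors $\delta\bar{x} = \frac{\partial}{\partial x^i}$ for $i=2,\dots,N$. This forces each bracket to vanish, which is exactly \cref{variationalSL} at $\mu=0$. No condition is imposed on $\delta x^1$, $\delta x^{N+1}$, $(J\delta x)^1$ or $(J\delta x)^{N+1}$; these are the free quantities $c^1, c^2$ of \cref{discreteSLEP}. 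Hence $W^\perp$ is precisely the solution space of the homogeneous difference system with free endpoint data.

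It remains to address the one point where care is needed. A Jacobi variation should mean a solution of the interior equations only, with no boundary conditions imposed; otherwise the statement fails. I would note this explicitly and observe, via the forward recurrence \cref{forwardRecurrence} (valid since $b_i<0$), that this space is two-dimensional, parametrized by $(\delta x^1,\delta x^2)$. As a consistency check, $\dim W^\perp = (N+1)-(N-1) = 2$ whenever $H_x|_W$ is nondegenerate. Beyond this definitional point the argument is routine and involves no real obstacle.
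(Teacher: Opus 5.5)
Your proof is correct and is essentially the paper's argument. The paper also notes that, since fixed-endpoint variations vanish in the first and last entries, $H_x(\delta x,\cdot)$ vanishes on $T_x\mathcal{C}^N_0$ exactly when $H_x\delta x=(c_1,0,\dots,0,c_2)^T$ with $c_1,c_2$ free, which is \cref{variationalSL} at $\mu=0$; your testing against $\frac{\partial}{\partial x^i}$, $i=2,\dots,N$, simply spells this out, and your reading of ``Jacobi variation'' as a solution of the interior equations only matches the paper's definition.
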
 
\begin{proof} Regarding $H_x$ as a map to the dual space of $T_x\mathcal{C}^N_0$, the null space consists of solutions $\delta x$ such that $H_x(\delta x, \cdot)$ vanishes on $T_x\mathcal{C}^N_0$. Because variations in $T_x\mathcal{C}^N_0$ are zero in their first and last entry, the nullspace precisely consists of any solutions $\delta x \in T_x\mathcal{C}^N$ to $H_x \delta x = (c_1,0,...,0,c_2)$ with $c_1$, $c_2$ free. This is true precisely when $\delta x$ solves \cref{variationalSL}.
\end{proof}

The following two propositions prove that Jacobi variations arise precisely as variations through critical points with respect to fixed endpoint variations. 

\begin{prop} Let $x$ be a billiard trajectory, i.e. a critical point of \cref{billiardLengthFunctional} with respect to a boundary relation \cref{discreteBCs}. Suppose $x(s)$ is a $C^1$ family of critical points with respect to fixed endpoint variations parametrized on an open neighborhood of 0, with $x(0) = x$. The variation $\delta x = \dds\big|_{s = 0}x(s)$ is a Jacobi variation.
\end{prop}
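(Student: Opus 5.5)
We differentiate the discrete Euler--Lagrange equations along the family $x(s)$. Since each $x(s)$ is a critical point with respect to fixed-endpoint variations, it satisfies \cref{discreteE-L} at every interior index:
\begin{equation}
\partial_1 S(x^i(s),x^{i+1}(s)) + \partial_2 S(x^{i-1}(s),x^i(s)) = 0, \quad i = 2,\dots,N,
\end{equation}
for all $s$ in the open neighborhood of $0$. No boundary condition is imposed on $x(s)$ itself. Fixed-endpoint criticality only involves variations vanishing at $x^1$ and $x^{N+1}$, so the endpoints may move with $s$, and the first and last rows of the Hessian are left free. This matches exactly the freedom of $c^1,c^2$ in the definition of a Jacobi variation.

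Next we differentiate each of these $N-1$ identities in $s$ at $s=0$. This is legitimate because $\gamma$ is $C^2$, so $\ell$ is $C^2$ on admissible pairs, which are open and avoid the diagonal; hence $\partial_1 S,\partial_2 S$ are $C^1$, and $s\mapsto x(s)$ is $C^1$. The chain rule gives, for $i=2,\dots,N$,
\begin{equation}
\partial^2_{12}S_{i-1}\,\delta x^{i-1} + \left(\partial_1^2 S_i + \partial_2^2 S_{i-1}\right)\delta x^i + \partial^2_{21}S_i\,\delta x^{i+1} = 0,
\end{equation}
with all second partials evaluated at $x(0)=x$. This is precisely \cref{variationalSL} with $\mu = 0$. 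Equivalently, $H_x\,\delta x = (c^1,0,\dots,0,c^2)^T$, where $c^1$ and $c^2$ are the (unconstrained) derivatives of the first and last rows. So $\delta x$ is a Jacobi variation, and by the previous proposition it lies in the $H_x$-orthogonal of $T_x\mathcal{C}^N_0$.

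The computation itself is routine. The only real care needed is to ensure that the hypotheses are used correctly. First, the family consists of critical points only for fixed-endpoint variations, even though $x$ itself satisfies a possibly different boundary relation \cref{discreteBCs}; we must therefore not differentiate the boundary equations. Second, the admissibility conditions $x^i(s)\neq x^{i+1}(s)$, with non-tangent chords, persist for small $s$ by openness of $\mathcal{C}^N$, so the formulas for the second partials computed above remain valid.
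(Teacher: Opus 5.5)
Your proof is correct and follows the paper's argument: you differentiate the interior discrete Euler--Lagrange equations $\partial_1 S_i + \partial_2 S_{i-1} = 0$ along the family at $s=0$, which yields \cref{variationalSL} with $\mu = 0$. Your added remarks are sound and only make explicit what the paper leaves implicit: the $C^1$ regularity of $\partial_1 S$ and $\partial_2 S$, and the fact that the boundary equations are not differentiated because the endpoints of $x(s)$ may move.
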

\begin{proof} For each $s$, $x(s)$ is a critical point of $L$ with respect to fixed-endpoint variations. This requires $DL_{x(s)}(\delta x) = 0$ for $\delta x \in T_{x(s)}\mathcal{C}^N_0$, or equivalently that $x(s)$ solves a discrete Euler-Lagrange equation for each $s$, 
	\begin{equation}\partial_1 S(x^i(s),x^{i+1}(s)) + \partial_2 S(x^{i-1}(s),x^i(s)) = 0, i = 2,...,N.
	\end{equation}
	Differentiating with respect to $s$ and evaluating at $s = 0$, we exactly obtain \cref{variationalSL}. 
\end{proof}

\begin{prop}\label{jacobiVariationThroughCriticalTrajectories} Suppose  $\delta x$ is a Jacobi variation. There exists a $C^1$ family of trajectories, i.e. critical points with respect to fixed endpoint variations $x(s)$, with $x(0) = x$ and $\dds\big|_{s = 0}x(s) = \delta x$.
\end{prop}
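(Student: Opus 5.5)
The plan is to build the family by shooting from the first two collision points, using the twist condition $\partial^2_{21}S_i \neq 0$ to make the discrete Euler--Lagrange recurrence uniquely and smoothly solvable.

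Fix the given Jacobi variation $\delta x$. Consider the $C^1$ curve of initial data
\begin{equation}
(x^1(s), x^2(s)) = (x^1 + s\,\delta x^1,\; x^2 + s\,\delta x^2),
\end{equation}
with $s$ small and $x^1, x^2$ the first two points of $x$. We propagate these data by the discrete Euler--Lagrange equation \cref{discreteE-L}: given $x^{i-1}(s), x^i(s)$, define $x^{i+1}(s)$ as the solution of
\begin{equation}
F_i(x^{i-1},x^i,x^{i+1}) := \partial_1 S(x^i,x^{i+1}) + \partial_2 S(x^{i-1},x^i) = 0 .
\end{equation}
At $s=0$ the solution is the original $x^{i+1}$. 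Since $\partial F_i/\partial x^{i+1} = \partial^2_{12}S_i = b_i < 0$, the implicit function theorem produces, near $(x^{i-1},x^i,x^{i+1})$, a unique $C^1$ solution $x^{i+1} = g_i(x^{i-1},x^i)$. It is defined on a neighbourhood of $(x^{i-1},x^i)$, and is locally the billiard map in the Lagrangian picture.

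We iterate for $i = 2,\dots,N$, shrinking the $s$-interval finitely many times. This keeps $(x^{i-1}(s),x^i(s))$ in each domain and keeps $x(s)$ in the open set $\mathcal{C}^N$ of admissible paths with $x^i(s)\neq x^{i+1}(s)$; openness and continuity make this possible. The resulting $x(s) = (x^1(s),\dots,x^{N+1}(s))$ is $C^1$, and $x(0)=x$ by uniqueness in the implicit function theorem. Each $x(s)$ satisfies \cref{discreteE-L} for $i=2,\dots,N$, i.e. it is a critical point with respect to fixed-endpoint variations; its endpoints move with $s$, which is allowed.

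Next we compute the derivative. Set $\eta = \dds\big|_{s=0} x(s)$. By construction $\eta^1 = \delta x^1$ and $\eta^2 = \delta x^2$. Differentiating $F_i(x^{i-1}(s),x^i(s),x^{i+1}(s)) = 0$ at $s=0$ (as in the preceding proposition) shows that $\eta$ satisfies the linear recurrence \cref{variationalSL} with $\mu=0$. By hypothesis $\delta x$ satisfies the same recurrence. Because $b_i \neq 0$, the forward recurrence \cref{forwardRecurrence} determines all later entries from two consecutive ones, so $\eta = \delta x$.

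The main obstacle is technical rather than conceptual. The shooting must stay valid over all $N$ steps: each implicit-function step needs its input to lie in the previous neighbourhood, and the chords must remain admissible, which requires the repeated shrinking of the $s$-interval described above. In the non-convex case, the $C^2$ regularity of $\gamma$ gives only $C^1$ regularity of $\partial_1 S$, hence only a $C^1$ family; this is exactly what is claimed.
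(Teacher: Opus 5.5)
Your proposal is correct and follows the paper's proof. Both shoot from the initial data $(x^1+s\,\delta x^1,\,x^2+s\,\delta x^2)$, propagate by the discrete Euler--Lagrange equation, differentiate at $s=0$ to obtain a Jacobi variation, and conclude by uniqueness of the recurrence given two consecutive entries; your implicit-function-theorem step with $\partial^2_{12}S_i=b_i<0$ simply makes explicit the $C^1$ dependence on $s$, the local uniqueness of $x^{i+1}$, and $x(0)=x$, all of which the paper takes for granted.
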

\begin{proof} Since Jacobi variations solve a second-order difference equation, any Jacobi variation $\delta x$ is uniquely specified by initial values $\delta x^1$ and $\delta x^2$. We now wish to construct parametrized $x^1(s), \ x^2(s)$ satisfying $\dds\big|_{s = 0}x^1(s) = \delta x^2$, $\dds\big|_{s=0} x^2(s) = \delta x^2$, and $x^1(0) = x^1, \ x^2(0) = x^2$. Working in coordinates, we may take $x^1(s) = x^1 + s\delta x^1$ and $x^2(s) = x^2 + s\delta x^2$. Given $x^{i-1}$ and $x^i$, there is a unique $x^{i+1}$ that solves the discrete Euler-Lagrange equation for our system, so we may extend $(x^1(s), x^2(s))$ to a length-$(N+1)$ trajectory $(x^1(s),x^2(s),x^3(s),...,x^{N+1}(s))$. For each $s$, this trajectory is a critical point with respect to fixed endpoint variations, and its derivative with respect to $s$ evaluated at $s = 0$ is a Jacobi variation. Since $\dds\big|_{s = 0}x^1(s) = \delta x^1$ and $\dds\big|_{s=0}x^2(s) = \delta x^2$ uniquely determine our Jacobi variation $\delta x$, we have constructed a variation through critical points with respect to fixed endpoint variations giving rise to the desired $\delta x$. 
\end{proof}

\begin{dfn} Let $x_J(s)$ be a family of variations through extremals giving rise to a Jacobi variation $\delta x_J$ with $\delta x_J^1 = 0, \delta x_J^2>0$. Identifying the sequence of collision coordinates $x_J(s)$ with the corresponding physical trajectories in the billiard table, we call the envelope of the family $x_J(s)$ the \emph{caustic} of $x_J(s)$, and a point in the table along the physical trajectory \emph{conjugate} to $x^1$ if it is a point of tangency of $x$ with the caustic. 
\end{dfn}

\noindent See \cref{conjugatefig} for a depiction. This terminology is analogous to the definition of a caustic in classical mechanics and the study of Lagrangian singularities. We note that it collides with the usual definition of a caustic in the billiard literature, which refers to a curve in the billiard table with the property that trajectories tangent to the caustic remain tangent after reflection \cite{Tabachnikov:billiards}. 

\begin{prop}\label{geometricConjugatePoints} The Morse index $i_0(x)$ with respect to fixed-endpoint variations of a length-$N$ billiard trajectory $x$ is equal to $N-1$ minus the number of conjugate points along $x$, excluding a possible conjugate point at $x^{N+1}$. If $x^{N+1}$ is a conjugate point, then $x$ is degenerate with respect to fixed-endpoint variations, that is, its nullity is 1, otherwise $x$ is nondegenerate and its nullity is 0. 
\end{prop}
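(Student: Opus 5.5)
The plan is to reduce the statement to a sign count for one Jacobi variation, and then translate that count into the geometry of the caustic. On $T_x\mathcal{C}^N_0$ we have $\delta x^1=\delta x^{N+1}=0$, so $i_0(x)$ and the nullity are the index and nullity of the interior block $J_0$ of the Hessian. This block is the $(N-1)\times(N-1)$ Jacobi matrix with diagonal $a_2,\dots,a_N$ and off-diagonal entries $b_2,\dots,b_{N-1}$, where every $b_j<0$. Let $\delta x_J$ be the Jacobi variation with $\delta x_J^1=0$ and $\delta x_J^2=1$. This is the variation used to define the caustic, and by \cref{jacobiVariationThroughCriticalTrajectories} it is realized by a family of trajectories.

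\textbf{Step 1 (algebraic count).} Let $D_k$ be the $k$-th leading principal minor of $J_0$, with $D_0=1$. The three-term expansion $D_k=a_{k+1}D_{k-1}-b_k^2D_{k-2}$ has the same form as the forward recurrence \cref{forwardRecurrence} for $\delta x_J$. Comparing the two recurrences, I expect to show by induction that
\begin{equation*}
\delta x_J^{k+2}=\frac{D_k}{\prod_{j=2}^{k+1}|b_j|},\qquad k=0,\dots,N-1 .
\end{equation*}
The positivity of $|b_j|$, which holds because $b_j<0$, is exactly what makes the signs agree. Jacobi's signature rule (Sylvester's law of inertia applied to an $LDL^T$ factorization) then says that the number of negative eigenvalues of $J_0$ equals the number of sign changes in $D_0,D_1,\dots,D_{N-1}$. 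Hence $i_0(x)$ is the number of sign changes in $\delta x_J^2,\dots,\delta x_J^{N+1}$. An interior zero $\delta x_J^{i}=0$ is surrounded by entries of opposite sign, as remarked before \cref{discreteSturmPicone}, so it can be treated by the usual perturbation convention for the signature rule; it counts as one sign change over the two adjacent steps.

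\textbf{Step 2 (nullity).} $J_0$ is singular exactly when $D_{N-1}=0$, that is, when $\delta x_J^{N+1}=0$. In that case $\delta x_J$ restricted to the interior is a kernel vector. The kernel is at most one dimensional: a kernel vector extended by zeros is a Jacobi variation with $\delta x^1=0$, so it is determined by $\delta x^2$ and is a multiple of $\delta x_J$. So the nullity is $1$ if $\delta x_J^{N+1}=0$ and $0$ otherwise.

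\textbf{Step 3 (geometry).} This is the main obstacle: identifying conjugate points with sign \emph{permanences}, not sign changes. Along the $i$-th segment, a reparametrization $\delta x^i$ at $\gamma(x^i)$ moves the segment endpoint transversally by $\delta x^i\sin\phi^i$, and $\delta x^{i+1}$ moves the other endpoint by $\delta x^{i+1}\sin\phi^{i+1}$. Because $\gamma$ is positively oriented, the unit tangents at the two ends point to opposite sides of the chord. So in a fixed normal frame along the chord, the transverse displacements have signs $+\operatorname{sgn}\delta x^i$ and $-\operatorname{sgn}\delta x^{i+1}$. I would verify this with the explicit formulas for $\partial_1\ell$ and $\partial_2\ell$. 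The segments of the family $x_J(s)$ move affinely in the transverse direction, so the envelope touches segment $i$ at the point where this linear transverse displacement vanishes. That point is interior to the segment exactly when $\delta x_J^i\delta x_J^{i+1}>0$, and it lies at $\gamma(x^{i+1})$ when $\delta x_J^{i+1}=0$. The first segment starts at the fixed point $x^1$ and is not a tangency, by the definition of conjugate to $x^1$.

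\textbf{Combining.} Among the $N-1$ consecutive pairs $(\delta x_J^i,\delta x_J^{i+1})$, $i=2,\dots,N$, each pair is either a sign change or a permanence, with interior zeros at collision points counted via the convention of Step 1. Step 3 says the permanences, together with interior zeros, are the conjugate points other than a possible one at $x^{N+1}$. Step 1 says the sign changes give $i_0(x)$. Therefore $i_0(x)=N-1-\#\{\text{conjugate points, excluding }x^{N+1}\}$. A zero at $x^{N+1}$ is excluded from this count and, by Step 2, corresponds exactly to nullity $1$.
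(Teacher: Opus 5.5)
Your argument is correct for nondegenerate $x$, and it reaches the key count by a genuinely different route from the paper.

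The paper gets ``$\delta x_J$ has $i_0(x)$ sign changes'' spectrally. \Cref{morseIndexEigenvalueProblem} identifies $i_0(x)$ with the number of Dirichlet eigenvalues $\mu\in(-1,0)$ of the weighted problem $(H_x-\mu Q)\delta x=(c^1,0,\dots,0,c^2)^T$. \Cref{zeroCountSL} then counts sign changes of the eigenvectors for the eigenvalues just below and just above $0$. Finally, \cref{discreteSturmPicone} squeezes the zeros of the $\mu=0$ solution $\delta x_J$ between theirs.

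You replace all of this with the exact identity $\delta x_J^{k+2}=D_k/\prod_{j=2}^{k+1}|b_j|$ and Jacobi's signature rule, i.e.\ a Sturm-sequence argument. Your induction works because $b_k|b_k|=-b_k^2$. This route has three advantages:
\begin{itemize}
\item It needs no auxiliary weight $q$ and no interlacing bookkeeping.
\item It treats zeros at collision points cleanly: if $D_k=0$ then $D_{k+1}=-b_{k+1}^2D_{k-1}$, so the neighbouring minors have opposite signs.
\item It gives the nullity statement for free, since $\delta x_J^{N+1}$ is a positive multiple of $\det J_0$.
\end{itemize}
Your Step 3 also justifies the geometric claim that the paper only asserts, via the linear transverse displacement along the chord with the endpoint tangents on opposite sides. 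In exchange, the paper's route stays inside the Sturm--Liouville framework it reuses for other boundary relations. Your minors argument is tailored to the Dirichlet block.

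One thing to fix, which the paper's proof glosses over in the same way: your combining step does not balance in the degenerate case. Suppose $\delta x_J^{N+1}=0$. Then $D_{N-2}\neq 0$, and a Schur complement gives $J_0\cong J'\oplus(0)$, with $J'$ the leading $(N-2)$-block. So $i_0(x)$ counts sign changes of $\delta x_J^2,\dots,\delta x_J^{N}$ only. The last pair $(\delta x_J^N,0)$ is then neither a sign change nor a counted conjugate point.

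What your argument actually proves is $i_0(x)+\nu=N-1-\#\{\text{conjugate points other than at }x^{N+1}\}$, where $\nu$ is the nullity. For example, take $N=2$ and $a_2=0$. Then $i_0(x)=0$, $\nu=1$, and there is no conjugate point before $x^3$, but the stated formula gives $1$. So the index formula should be asserted for nondegenerate $x$, or with the conjugate point at $x^{N+1}$ counted.
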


\begin{proof} In \cref{morseIndexEigenvalueProblem}, we proved that the Morse index $i_0(x)$ with respect to fixed-endpoint variations is equal to the number of eigenvalues $\mu \in (-1,0)$ of the discrete Sturm-Liouville eigenvalue problem $(H_x - \mu Q) \delta x = (c^1,0,...,0,c^2)^T$ with boundary conditions $\delta x^1 = \delta x^{N+1} = 0$. Let $\delta x_{(\mu^-)}$ be the solution with the largest negative (closest to zero) eigenvalue and and let $\delta x_{(\mu^+)}$ be the solution with the smallest nonnegative eigenvalue. 
	
	By \cref{zeroCountSL} $\delta x_{(\mu^-)}$ and $\delta x_{(\mu^+)}$ have $i_0(x) - 1$ and $i_0(x)$ sign changes respectively, not counting the first and last entry, which must be zero per the boundary conditions. 
	
	Let $\delta x_J$ be the Jacobi vector solving the Sturm-Liouville initial value problem $H_x \delta x_J = (c^1,0,...,0,c^2)^T$ with $\delta x_J^1 = 0$ and $\delta x_J^2 >0$. Since this problem corresponds to the Sturm-Liouville problem with $\mu=0$, the zeros of of $\delta x_J$ interlace those of $\delta x_{\mu^-}$ and $\delta x_{(\mu^+)}$, so $\delta x_J$ has $i_0(x)$ zeros.
	
	Geometrically, we observe that the absence of a sign change of $\delta x_J$ between the $i^{th}$ and $(i+1)^{th}$ collision indicates a tangency with the caustic along the segment of the trajectory between them. If $\delta x_J$ has $i_0(x)$ sign changes not at collision points, there are $(N-1) - i_0(x)$ segments or collision points on which no sign change happens. If there are zeros of $\delta x_J$ at a collision point, the tangency with the caustic occurs at the collision point along the boundary and the index count does not change. Moreover, there is a tangency with the caustic at the $(N+1)^{th}$ collision if and only if $\delta x_J$ is in the kernel of $D^2L_x$, hence if and only if $x$ is degenerate with respect to fixed-endpoint variations and its nullity is 1.

\end{proof}

\subsection{Morse Theory for Periodic Trajectories and their Iterates } 
Our forthcoming discussion of the stability and instability of periodic trajectories depends on an understanding of the Morse indices of periodic trajectories with respect to periodic variations, and indices of higher iterates of these trajectories.

\begin{dfn} Let $x$ be a nondegenerate critical point with respect to periodic variations. We define the \emph{concavity} of $x$, denoted $C_\Delta(x)$ to be the difference between $i_\Delta(x)$ and $i_0(x)$. That is, $C_\Delta(x)$ satisfies
	\begin{equation}i_\Delta(x) = i_0(x)+C_\Delta(x).
	\end{equation}
\end{dfn}
\noindent The term concavity is borrowed from the terminology used in \cite{BTZ:geodesics}. Looking forward, we would like to compute $i_\Delta(x)$ from $i_0(x)$ and $C_\Delta(x)$, rather than define $C_\Delta(x)$ in terms of $i_\Delta(x)$ and $i_0(x)$. Just as we had an index theorem relating $i_0(x)$ to the sign changes of a Jacobi variation, we can relate the concavity $C_\Delta(x)$ to the behavior of some special variation.

\begin{prop}\label{concavityVector} Let $x$ be a nondegenerate periodic trajectory of a convex billiard. Up to scaling, there is a unique periodic Jacobi variation $\delta x_\Delta$ solving $H_x(\delta x_\Delta, \cdot) = (c^1,0,...,0,c^2).$ The concavity is then given by
\begin{equation}
	C_\Delta(x) = \begin{cases} 1, \text{ if } H_x(\delta x_\Delta, \delta x_\Delta) \leq 0\\
		0, \text{ if } H_x(\delta x_\Delta, \delta x_\Delta) >0.
	\end{cases}
\end{equation}
\end{prop}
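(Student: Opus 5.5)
We apply \cref{indexSubspace} with $U = T_x\mathcal{C}^N_\Delta$, the periodic variations, and $W = T_x\mathcal{C}^N_0 \subset U$, the fixed-endpoint variations. The index formula then reads
\begin{equation*}
i_\Delta(x) = i_0(x) + \ind H_x|_{W^\perp} + \dim(W\cap W^\perp) - \dim(W \cap \ker H_x|_U),
\end{equation*}
so the work reduces to identifying $W^\perp$ inside $U$ and the two correction terms.

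\textbf{Identifying $W^\perp$.} By the proposition characterizing the orthogonal to $T_x\mathcal{C}^N_0$ in $T_x\mathcal{C}^N$, this orthogonal consists exactly of Jacobi variations. Hence $W^\perp$ (taken within $U$) consists of the periodic Jacobi variations, i.e.\ solutions of \cref{variationalSL} with $\mu=0$ and $\delta x^1 = \delta x^{N+1}$.

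Jacobi variations form a two-dimensional space, parametrized by $(\delta x^1,\delta x^2)$ through the forward recurrence \cref{forwardRecurrence}. So the map $\delta x \mapsto \delta x^{N+1}-\delta x^1$ is a linear functional on a 2-dimensional space, and its kernel has dimension at least one. To get dimension exactly one, I would show that this functional is not identically zero. If it were, then in particular the Jacobi variation with $\delta x^1=\delta x^{N+1}=0$ and $\delta x^2\ne 0$ would be periodic. That variation satisfies $H_x\delta x = (c^1,0,\dots,0,c^2)$ with vanishing endpoints, so it lies in the kernel of $H_x$ restricted to fixed-endpoint variations. This is the degenerate case: I expect nondegeneracy of $x$ to rule it out, and at worst it is an explicit extra hypothesis that $i_0$ is also nondegenerate. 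This gives uniqueness of $\delta x_\Delta$ up to scaling.

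\textbf{Correction terms.} Since $W\cap W^\perp \subset \ker H_x|_U$ only when the vector also pairs to zero with itself, I would check them directly.
\begin{itemize}
\item $W\cap\ker H_x|_U \subset \ker H_x|_U = 0$ by nondegeneracy with respect to periodic variations, so this term vanishes.
\item A vector in $W\cap W^\perp$ is a Jacobi variation vanishing at both endpoints. As argued above, it is zero unless $x$ is degenerate for fixed endpoints, so this term vanishes as well.
\end{itemize}
It follows that $C_\Delta(x) = \ind H_x|_{\mathrm{span}(\delta x_\Delta)}$, which is $1$ if $H_x(\delta x_\Delta,\delta x_\Delta)<0$ and $0$ if it is $>0$.

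\textbf{The boundary case $H_x(\delta x_\Delta,\delta x_\Delta)=0$.} This is the main obstacle. The pairing of $\delta x_\Delta$ against any $w\in W$ vanishes by orthogonality, and its pairing with itself is zero by assumption. Since $U = W + \mathrm{span}(\delta x_\Delta)$ (whenever $\delta x_\Delta \notin W$), $\delta x_\Delta$ would then lie in $\ker H_x|_U$, contradicting nondegeneracy. So this case cannot occur, and lumping it with the case $\le 0$ in the statement is harmless.

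If instead $\delta x_\Delta\in W$, then $\delta x_\Delta$ is a Jacobi variation with zero endpoints, which is the degenerate fixed-endpoint situation already excluded. Convexity is used only so that $\mathcal{C}^N$ is the full product and the recurrence is globally defined.
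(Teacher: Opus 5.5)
Your decomposition is the same as the paper's ($U=T_x\mathcal{C}^N_\Delta$, $W=T_x\mathcal{C}^N_0$ in \cref{indexSubspace}), and your treatment of the case $H_x(\delta x_\Delta,\delta x_\Delta)\neq 0$ is fine. There is a genuine gap, though: you repeatedly assume that $x$ is nondegenerate with respect to fixed-endpoint variations, and that is not a hypothesis. The statement, like the definition of $C_\Delta$, only assumes that $H_x|_U$ is nondegenerate. This allows $x^{N+1}$ to be conjugate to $x^1$.

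For example, take $N=2$ with $\kappa_2=1/\ell$. Then $a_2=0$, so $J_0=(0,1,0)^T$ is a Jacobi variation vanishing at both endpoints. In the basis $(1,0,1)^T,(0,1,0)^T$ of $U$, the restricted form has entries $2(\kappa_1-1/\ell)$, $-2/\ell$, $0$, with determinant $-4/\ell^2\neq 0$. Here $\delta x_\Delta=J_0\in W$, $H_x(\delta x_\Delta,\delta x_\Delta)=0$, $i_0(x)=0$ and $i_\Delta(x)=1$.

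So your conclusion that the equality case ``cannot occur'' is false. It is exactly the fixed-endpoint-degenerate case, and the statement's claim $C_\Delta(x)=1$ there is something you must prove. The missing step is the paper's last paragraph. If $\delta x_\Delta\in W$, then $W\cap W^\perp=W^\perp=\mathrm{span}(\delta x_\Delta)$ has dimension $1$. $H_x$ vanishes identically on $W^\perp$, so $\ind H_x|_{W^\perp}=0$. Also $W\cap\ker H_x|_U=0$. Hence $C_\Delta(x)=0+1-0=1$. Your argument that $H_x(\delta x_\Delta,\delta x_\Delta)=0$ with $\delta x_\Delta\notin W$ would put $\delta x_\Delta$ in $\ker H_x|_U$ is correct. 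It is precisely what identifies the zero case with $\delta x_\Delta\in W$.

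The uniqueness step should not lean on the extra hypothesis either, nor be left as ``I expect''. Nondegeneracy of $H_x|_U$ gives $\dim W^\perp=\dim U-\dim W=N-(N-1)=1$ directly, which is the paper's route. In your parametrization, suppose $\delta x\mapsto\delta x^{N+1}-\delta x^1$ vanished on all Jacobi variations. Then $J_0$ (with $\delta x^1=0$, $\delta x^2=1$) and any Jacobi variation $J_1$ with $\delta x^1=1$ would both lie in $U$. We have $H_x(J_0,W)=0$ because $J_0$ is a Jacobi variation. We also have $H_x(J_0,J_1)=J_0^TH_xJ_1=0$, because $H_xJ_1$ is supported on the endpoints, where $J_0$ vanishes. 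Since $U=W\oplus\mathrm{span}(J_1)$, this puts $J_0$ in $\ker H_x|_U$, contradicting periodic nondegeneracy outright. The consequence you drew instead, fixed-endpoint degeneracy, is not a contradiction under the stated hypotheses.
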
 
\begin{proof} The bilinear form $H_x$ is nondegenerate restricted to $T_x\mathcal{C}^N_\Delta$, so 
\begin{equation}
	\dim(T_x\mathcal{C}^N_0) + \dim((T_x\mathcal{C}^N_0)^\perp) = \dim(T_x\mathcal{C}^N_\Delta)
\end{equation}
and thus $\dim((T_x\mathcal{C}^N_0)^\perp) = 1$. So there exists $\delta x_\Delta \in (T_x\mathcal{C}^N_0)^\perp$ and it is unique up to scaling. Recall that this Jacobi vector must solve
\begin{equation}
	H_x \delta x_\Delta = (c^1,0,...,0,c^2)^T,
\end{equation}
and because $H_x$ is nondegenerate on $T_x\mathcal{C}^N_\Delta$, $c^1 \neq -c^2$ and $c^1,c^2 \neq 0$. With this in mind, $H_x(\delta x_\Delta, \delta x_\Delta) = 0$ if and only if $\delta x_\Delta$ is a fixed-endpoint Jacobi variation, that is, when $x$ is degenerate with respect to fixed-endpoint variations. Conversely, $x$ is nondegenerate with respect to fixed-endpoint variations if and only if $H_x(\delta x_\Delta, \delta x_\Delta) \neq 0$. 

In either case, we compute the concavity using \cref{IndexDecomposition},
\begin{align} 
	\ind H_x\big|_{T_x\mathcal{C}^N_\Delta} = &\ind H_x\big|_{T_x\mathcal{C}^N_0} + \ind H_x\big|_{(T_x\mathcal{C}^N_0)^\perp} \\ 
	&+ \dim(T_x\mathcal{C}^N_0 \cap (T_x\mathcal{C}^N_0)^\perp) + \dim(T_x\mathcal{C}^N_0 \cap \ker H_x\big|_{T_x\mathcal{C}^N_\Delta}).	
\end{align}
The latter three terms constitute the concavity. The last term automatically vanishes because we assume $H_x$ is nondegenerate on $H_x\big|_{T_x\mathcal{C}^N_\Delta}$. 

In the case where $H_x(\delta x_\Delta, \delta x_\Delta) \neq 0$, $x$ is nondegenerate with respect to fixed-endpoint variations, so $\dim(T_x\mathcal{C}^N_0 \cap (T_x\mathcal{C}^N_0)^\perp)=0$ and the third term vanishes as well. In this case, the second term is the index of $H_x$ along the the subspace spanned by $\delta x_\Delta$, which we compute via the sign of $H_x(\delta x_\Delta,\delta x_\Delta)$. If this sign is strictly negative, $C_\Delta(x)= 1$, and if the sign is strictly positive, $C_\Delta(x) = 0$.

When $H_x(\delta x_\Delta, \delta x_\Delta) = 0$, $x$ is degenerate with respect to fixed-endpoint variations, so $\delta x_\Delta$ is a fixed-endpoint variation, and $\dim(T_x\mathcal{C}^N_0 \cap (T_x\mathcal{C}^N_0)^\perp) = 1$. Finally, $\ind H_x\big|_{(T_x\mathcal{C}^N_0)^\perp)} = 0$, so our formula gives $C_\Delta(x) = 1$. 
\end{proof}

As is the case for fixed-endpoint variations, we would like to see the concavity reflected in a geometric feature of trajectories in the billiard table. If $x$ is degenerate with respect to fixed-endpoint variations, then the periodic Jacobi vector $\delta x_\Delta$ is a fixed-endpoint Jacobi vector, so $x^{N+1}$ and $x^1$ are conjugate. Geometrically, this situation occurs if and only if the caustic of the family $x_\Delta(s)$ giving rise to $\delta x_\Delta$ is tangent to the trajectory $x$ precisely at the reflection point $x^1 = x^{N+1}$.
When $x$ is nondegenerate with respect to fixed-endpoint variations, we can deduce the concavity from a family of trajectories giving rise to periodic Jacobi variations. Such trajectories satisfy $x^1 = x^{N+1}$ but are not periodic trajectories in the dynamical systems sense, as the incidence angles $\phi^1$ and $\phi^{N+1}$ must differ. 
\begin{dfn} Let $x$ be a billiard trajectory satisfying $x^1 = x^{N+1}$, but not necessarily $\phi^1 = \phi^{N+1}$. We will call such trajectories \emph{configuration-periodic}.
\end{dfn}
We recall from \cref{concavityVector} that up to scaling, there is a unique periodic Jacobi variation $\delta x_\Delta$ at $x$, and from \cref{jacobiVariationThroughCriticalTrajectories}, there is a family $x(s)$ such that $\dds\big|_{x = 0} x(s) = \delta x_\Delta$. It remains to show that this family may be taken to be configuration-periodic. 
\begin{prop} Let $x$ be a periodic trajectory in a billiard, nondegenerate with respect to both periodic and fixed-endpoint variations. There is a $C^1$ family $x_\Delta(s)$ of configuration-periodic trajectories parametrized on an open neighborhood of 0 such that $x_\Delta(0) = x$ and $\dds\big|_{x = 0} x_\Delta(s) = \delta x_\Delta$is a periodic Jacobi variation.
\end{prop}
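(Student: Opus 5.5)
We construct the family by solving a two-point boundary value problem, using nondegeneracy with respect to fixed-endpoint variations and the implicit function theorem. Let $\delta x_\Delta$ be the periodic Jacobi variation from \cref{concavityVector}, normalized so that $\delta x_\Delta^1 = \delta x_\Delta^{N+1} =: \tau$. We may take $\tau \neq 0$: if $\tau = 0$, then $\delta x_\Delta$ would be a nonzero fixed-endpoint Jacobi variation, contradicting nondegeneracy with respect to fixed-endpoint variations (\cref{geometricConjugatePoints}).

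For endpoints $(a,b) \in S^1\times S^1$ near $(x^1,x^{N+1})$, consider the map
\begin{equation*}
F(a,b,y^2,\dots,y^N) = \big(\partial_1 S(y^i,y^{i+1}) + \partial_2 S(y^{i-1},y^i)\big)_{i=2,\dots,N},
\end{equation*}
with the conventions $y^1 = a$ and $y^{N+1} = b$. Zeros of $F$ are exactly the trajectories from $a$ to $b$. At $(x^1,x^{N+1},x^2,\dots,x^N)$, the derivative of $F$ in the interior variables $(y^2,\dots,y^N)$ is the interior $(N-1)\times(N-1)$ block of $H_x$. This block is the Hessian restricted to $T_x\mathcal{C}^N_0$, and it is invertible because $x$ is nondegenerate with respect to fixed-endpoint variations. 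The implicit function theorem (here $S$ is $C^2$ since $\gamma$ is $C^2$ and the segments are admissible) then gives a $C^1$ map $(a,b)\mapsto y(a,b)$ of fixed-endpoint critical points with $y(x^1,x^{N+1}) = x$.

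Next restrict to the diagonal. Define $x_\Delta(s) = (x^1+s\tau,\ y(x^1+s\tau,x^{N+1}+s\tau),\ x^{N+1}+s\tau)$. Since $x^1 = x^{N+1}$, each $x_\Delta(s)$ has equal first and last collision points, so it is configuration-periodic. By construction $x_\Delta(s)$ solves the interior Euler--Lagrange equations, and it is $C^1$ in $s$ with $x_\Delta(0) = x$. For $s$ small it stays admissible, because $\mathcal{C}^N$ is open.

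To identify the derivative, differentiate $F(x_\Delta(s)) = 0$ at $s=0$. This shows that $\delta := \dds\big|_{s=0}x_\Delta(s)$ solves \cref{variationalSL} with $\mu = 0$ and has $\delta^1 = \delta^{N+1} = \tau$. So $\delta$ is a periodic Jacobi variation, and $\delta - \delta x_\Delta$ is a Jacobi variation with zero endpoints. By fixed-endpoint nondegeneracy, $\delta - \delta x_\Delta = 0$, so $\delta = \delta x_\Delta$.

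The main obstacle is justifying the implicit function step cleanly: the interior block must be identified with $H_x|_{T_x\mathcal{C}^N_0}$ and its invertibility with fixed-endpoint nondegeneracy. Note that \cref{jacobiVariationThroughCriticalTrajectories} alone, which shoots forward from the initial data $(\delta x^1,\delta x^2)$, does not control the last endpoint, which is why the boundary-value construction is used instead. Periodic nondegeneracy is used only through \cref{concavityVector}, to guarantee that $\delta x_\Delta$ exists and is unique up to scaling.
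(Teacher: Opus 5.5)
Your proof is correct, but it is set up differently from the paper's. The paper works in phase space. With $F$ the billiard map, it sets $f(x,\phi) = x - \pi_x(F^N(x,\phi))$ and argues that $\partial f/\partial\phi \neq 0$ at $(x^1,\phi^1)$; otherwise varying the initial angle would give a nonzero Jacobi variation with $\delta x^1 = \delta x^{N+1} = 0$. It then solves $f=0$ for the initial angle $\phi(x)$ by the implicit function theorem and shoots from $(x^1(s),\phi(x^1(s)))$.

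You stay in the Lagrangian framework. You solve the Dirichlet problem for the interior collision points as $C^1$ functions of both endpoints, then restrict to the diagonal. Your nondegeneracy input is literally the invertibility of the interior block $H_x|_{T_x\mathcal{C}^N_0}$. This is equivalent to the paper's condition $\partial f/\partial\phi\neq 0$, since both say that no nonzero Jacobi variation vanishes at both ends, so the two arguments share the same core.

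The paper's version is shorter and reduces to a single scalar equation. It also hands over directly the angles $\phi_\Delta^1(s)$ and $\phi_\Delta^{N+1}(s)$ that \cref{geometricConcavity} then uses. Yours gives a two-parameter family with arbitrary nearby endpoints. It also makes explicit two points the paper leaves implicit: the endpoint motion must have $\tau\neq 0$ for the derivative to be nonzero, and the derivative is exactly $\delta x_\Delta$, by uniqueness of the Dirichlet problem.

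One small remark: you do not need \cref{concavityVector}, which is stated for convex billiards and invokes periodic nondegeneracy. For any $\tau\neq 0$ your construction itself produces a periodic Jacobi variation, and fixed-endpoint nondegeneracy alone makes it unique up to scale. So, as in the paper's proof, periodic nondegeneracy is never actually used.
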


\begin{proof}
Let $F$ be the map taking a coordinate and incidence angle to the subsequent coordinate and incidence angle via the billiard reflection law. Consider the function $f(x,\phi) = x - \pi_x(F^N(x,\phi))$, where $\pi_x$ denotes projection onto the $x$ component of $F^N(x,\phi)$. Since $(x^1,\phi^1)$ is a periodic point, $f(x^1,\phi^1) = 0.$ 

Moreover, we claim $\frac{\partial f}{\partial \phi} \neq 0$ at $(x^1,\phi^1)$. To see this, we consider the family $x_0(s)$ of reflection points of length $N+1$ trajectories with initial condition $(x^1, \phi^1 + s)$; if $\frac{\partial f}{\partial \phi} = 0$ at $(x^1,\phi^1)$, then $\delta x_0 = \dds\big|_{s=0} x_0(s)$ is a Jacobi vector with $\delta x^1 = \delta x^{N+1} = 0$, and our trajectory is a degenerate critical point with respect to fixed endpoint variations. Incidentally, this means $x^{N+1}$ is conjugate to $x^1$. 

By the implicit function theorem, the level set $f(x,\phi) = 0$ can be written as the graph of a function $\phi(x)$ in a neighborhood of $(x^1,\phi^1)$. For any choice of $x^1(s)$ with $x^1(0) = x$, the family of trajectories with initial condition $(x^1(s),\phi^1(x(s)))$ is a Jacobi variation through configuration-periodic trajectories in a neighborhood of $s = 0$. 
\end{proof}

Next we will prove that the difference between $\phi_\Delta^{N+1}(s)$ and $\phi_\Delta^{1}(s)$ in the family $x_\Delta(s)$ giving rise to a periodic Jacobi variation determines the concavity. See \cref{concavityfig} for a depiction. 
\begin{prop}\label{geometricConcavity} Let $x$ be a periodic trajectory in a billiard, nondegenerate with respect to both periodic and fixed-endpoint variations. Let $x_\Delta(s)$ be a $C^1$ variation through configuration-periodic trajectories parametrized on an open neighborhood of 0 giving rise to a periodic Jacobi variation $\delta x_\Delta$, scaled so that $\delta x_\Delta^1 >0$. Let $\phi_\Delta(s)$ be the incidence angles with the billiard boundary along the trajectories $x_\Delta(s)$. Then for sufficiently small $\epsilon>0$, 

\begin{equation}\label{concavityCases} 
	C_\Delta(x) = \begin{cases} 0, \text{ if }  \phi_\Delta^{N+1}(\epsilon)- \phi_\Delta^1(\epsilon) >0 \\
		1, \text{ if }\phi_\Delta^{N+1}(\epsilon)- \phi_\Delta^1(\epsilon) < 0.
	\end{cases}
\end{equation} 
\end{prop}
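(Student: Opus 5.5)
By \cref{concavityVector}, we already know $C_\Delta(x)$ is determined by the sign of $H_x(\delta x_\Delta,\delta x_\Delta)$. In the present nondegenerate case this quantity is nonzero. So the plan is to compute $H_x(\delta x_\Delta,\delta x_\Delta)$ from the boundary terms of the Jacobi equation, and then relate those boundary terms to the first-order change of $\phi^{N+1}_\Delta(s)-\phi^1_\Delta(s)$.

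\emph{Step 1 (boundary form).} Since $\delta x_\Delta$ solves \cref{variationalSL} with $\mu=0$, the sum in \cref{secondVariation} collapses. This gives
\[
H_x(\delta x_\Delta,\delta x_\Delta)=c^1\delta x_\Delta^1+c^2\delta x_\Delta^{N+1}=(c^1+c^2)\,\delta x_\Delta^1,
\]
where $c^1=\partial_1^2S_1\delta x^1_\Delta+\partial^2_{12}S_1\delta x^2_\Delta$ and $c^2=\partial^2_{21}S_N\delta x^N_\Delta+\partial_2^2S_N\delta x^{N+1}_\Delta$. Because $\delta x_\Delta^1>0$, the sign of $H_x(\delta x_\Delta,\delta x_\Delta)$ equals the sign of $c^1+c^2$.

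\emph{Step 2 (identify $c^1,c^2$ with angle variations).} Along the family $x_\Delta(s)$, the endpoint partials are $\partial_1 S(x^1(s),x^2(s))$ and $\partial_2 S(x^N(s),x^{N+1}(s))$. Differentiating them in $s$ at $s=0$ gives exactly $c^1$ and $c^2$. The paper computed
\[
\partial_1 S_1=\cos\phi^1 \quad\text{and}\quad \partial_2 S_N=-\cos\bar\phi^{N+1}.
\]
Here I would recheck the signs from the stated formulas $-\partial_1(-\ell_i)=-\cos\phi^i$ and $\partial_2(-\ell_{i-1})=-\cos\bar\phi^i$. Then
\[
c^1+c^2=\dds\Big|_{0}\big(\cos\phi^1_\Delta(s)-\cos\phi^{N+1}_\Delta(s)\big),
\]
where $\phi^{N+1}_\Delta$ denotes the incoming angle at the last collision; by the reflection law it equals the outgoing one. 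At $s=0$ we have $\phi^1=\phi^{N+1}$, so this derivative equals
\[
\sin\phi^1\,\dds\Big|_0\big(\phi^{N+1}_\Delta-\phi^1_\Delta\big).
\]
Since $\sin\phi^1>0$, the sign of $c^1+c^2$ is the sign of the derivative of $g(s)=\phi^{N+1}_\Delta(s)-\phi^1_\Delta(s)$ at $0$.

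\emph{Step 3 (conclude).} We have $g(0)=0$, and $g'(0)\neq 0$ by nondegeneracy: $g'(0)=0$ would force $c^1=-c^2$, which \cref{concavityVector} excludes. Hence for small $\epsilon>0$, $g(\epsilon)$ has the sign of $g'(0)$. If $g(\epsilon)>0$, then $H_x(\delta x_\Delta,\delta x_\Delta)$ has one definite sign, and if $g(\epsilon)<0$ it has the opposite sign. \cref{concavityVector} then converts this sign into $C_\Delta=0$ or $1$.

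\emph{Main obstacle.} The main difficulty is the sign bookkeeping in Step 2. This covers the orientation of $\phi$ relative to $\dot\gamma$, and whether $\phi^{N+1}_\Delta$ is incoming or outgoing. It also covers the relation between $\partial_2 S_N$ and the reflection angle, which determines whether $g>0$ corresponds to $H>0$ (giving $C_\Delta=0$) as claimed. I would verify this first on a concrete example, namely the 2-periodic diameter orbit of a circle or ellipse, where the index is known. I would then fix the convention so that it matches \eqref{concavityCases}.
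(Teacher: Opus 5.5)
Your proposal is correct and follows essentially the same route as the paper. Both arguments reduce to the sign of $H_x(\delta x_\Delta,\delta x_\Delta)$ via \cref{concavityVector}, collapse it to the boundary terms $c^1\delta x_\Delta^1+c^2\delta x_\Delta^{N+1}$, and identify these with $\dds\big|_{s=0}\bigl(\cos\phi^1_\Delta(s)-\cos\phi^{N+1}_\Delta(s)\bigr)$. The paper then compares this derivative with the finite difference at $s=\epsilon$ using a sum-to-product identity. Your chain-rule linearization, together with the explicit check $g'(0)\neq 0$, is equivalent, and it makes explicit a nonvanishing that the paper uses without stating it.

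Your hedge about signs is unnecessary. With $\partial_1S_1=\cos\phi^1$ and $\partial_2S_N=-\cos\bar\phi^{N+1}$, your Steps 1--2 already give $g(\epsilon)>0\Rightarrow g'(0)>0\Rightarrow c^1+c^2>0\Rightarrow H_x(\delta x_\Delta,\delta x_\Delta)>0\Rightarrow C_\Delta(x)=0$, which is exactly the stated direction. You should state this chain outright rather than leave the direction to be fixed by an example.
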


\begin{proof}
By \cref{concavityVector} we can compute the concavity via the sign of $H_x(\delta x_\Delta,\delta x_\Delta)$. We may compute this using the variation through configuration-periodic trajectories $x_\Delta(s)$. 
\begin{align}H_x(\delta x_\Delta,\delta x_\Delta) =& \left(\partial_1^2S(x^1,x^2)\delta x_\Delta^1 + \partial^2_{21}S(x^1,x^2)\delta x_\Delta^2\right)\delta x_\Delta ^1 \\
	& + \left(\partial^2_{12}S(x^N,x^{N+1})\delta x_\Delta^N+ \partial_2^2S(x^N,x^{N+1})\delta x_\Delta^{N+1}\right)\delta x_\Delta^{N+1}
\end{align}
Next, we observe
\begin{align}
	\partial_1^2S(x^1,x^2)&\delta x_\Delta^1 + \partial^2_{21}S(x^1,x^2)\delta x_\Delta^2\\
	\ \ \ \ \  &= \partial_1^2S(x^1,x^2) \dds\bigg|_{s=0} x^1_\Delta(s) + \partial^2_{21}S(x^1,x^2)\dds\bigg|_{s=0} x^2_\Delta(s)\\
	& = \dds\bigg|_{s = 0}\partial_1 S(x_\Delta^1,x_\Delta^2) = \dds\bigg|_{s=0}\left(\cos(\phi_\Delta^1(s))\right).
\end{align}
By similar computation, 
\begin{align}
	(\partial^2_{12}S&(x^N,x^{N+1})\delta x_\Delta^N + \partial_2^2S(x^N,x^{N+1})\delta x_\Delta^{N+1}\\
	\ \ \ \ \ 
	\ \ \ \ \ &= \dds\bigg|_{s=0}\left(-\cos(\phi_\Delta^{N+1}(s))\right).
\end{align}
Scaling $\delta x_\Delta^1 = \delta x_\Delta^{N+1} = c >0$, we have
\begin{align}
	H_x(\delta x_\Delta,\delta x_\Delta) &= c\left(\dds\bigg|_{s=0}\left(\cos(\phi_\Delta^1(s))\right) + \dds\bigg|_{s=0}\left(-\cos(\phi_\Delta^{N+1}(s))\right)\right)\\
	&= c \dds\bigg|_{s = 0}\left(\cos(\phi_\Delta^1(s)) - \cos(\phi_\Delta^{N+1}(s))\right).
\end{align}

Since our original trajectory $x_\Delta(0) = x $ is periodic, $\phi_\Delta^1(0) = \phi^1 = \phi^{N+1} = \phi_\Delta^{N+1}(0)$, and $\cos(\phi^1(0)) - \cos(\phi^{N+1}(0)) = 0$. Thus, for $\epsilon >0$ sufficiently small $\dds\big|_{s = 0}\left(\cos(\phi_\Delta^1(s)) - \cos(\phi_\Delta^{N+1}(s))\right)$ has the same sign as $\cos(\phi_\Delta^1(\epsilon)) - \cos(\phi_\Delta^{N+1}(\epsilon))$. Next, note that 
\begin{align}
	\cos(\phi_\Delta^1(\epsilon)) - \cos(\phi_\Delta^{N+1}(\epsilon)) = 2\sin&\left(\frac{1}{2}\left(\phi_\Delta^{N+1}(\epsilon) - \phi_\Delta^{1}(\epsilon)\right)\right)\\ &\cdot\sin\left(\frac{1}{2}\left(\phi_\Delta^1(\epsilon) + \phi_\Delta^{N+1}(\epsilon)\right)\right).
\end{align}
Since $\phi \in (0,\pi)$, the factor $\sin\left(\frac{\phi_\Delta^1(\epsilon) + \phi_\Delta^{N+1}(\epsilon)}{2}\right)$ is positive, so the sign of our expression depends only on the sign of $\sin\left(\frac{\phi_\Delta^{N+1}(\epsilon) - \phi_\Delta^{1}(\epsilon)}{2}\right)$, which has the same sign as $\phi_\Delta^{N+1}(\epsilon) - \phi_\Delta^{1}(\epsilon)$. Thus, $H_x(\delta x_\Delta,\delta x_\Delta)>0$ and $C_\Delta(x)=0$ when $\phi_\Delta^{N+1}(\epsilon) - \phi_\Delta^{1}(\epsilon)>0$, and similarly $H_x(\delta x_\Delta,\delta x_\Delta) < 0$ and $C_\Delta(x)=1$ when $\phi_\Delta^{N+1}(\epsilon) - \phi_\Delta^{1}(\epsilon)<0$.
\end{proof}

\begin{figure}
\centering
\begin{subfigure}{0.4\textwidth}
	\centering
	\begin{tikzpicture}
		\node at (0,0) {\includegraphics[height=1.5in]{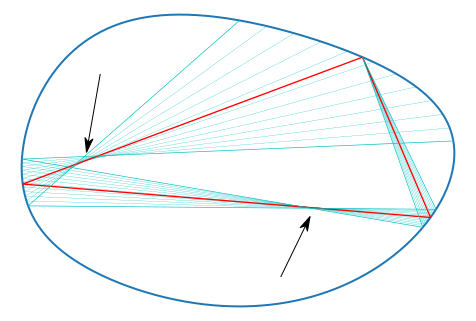}};
	\end{tikzpicture}
	\caption{\label{conjugatefig}A variation through extremals near the mountain pass 3-periodic trajectory, arrows indicate 2 conjugate points.}
\end{subfigure}%
\hspace{0.1\textwidth}
\begin{subfigure}{0.4\textwidth}
	\centering
	\begin{tikzpicture}
		\node at (0,0) {\includegraphics[height=1.5in]{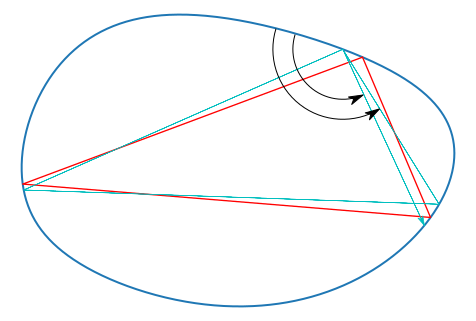}};
		\node at (.75,.27) {$\phi^1(\epsilon)$};
		\node at (1.3,1.7) {$\phi^4(\epsilon)$};
	\end{tikzpicture}
	\caption{\label{concavityfig}A configuration-periodic trajectory near the mountain pass 3-periodic trajectory, with $x^1(\epsilon)>x^1(0)$ and $\phi^4(\epsilon) -  \phi^1(\epsilon) < 0$, indicating $C_\Delta(x) = 1$.}
\end{subfigure}
\caption{\label{mtn3p1stIterateIndex}Conjugate point and concavity contributions to the Morse index for the mountain pass 3-periodic trajectory in a convex billiard. 2 conjugate points and a concavity contribution of 1 indicate index $i_\Delta(x) = (2- 2)+ 1 = 1$.}
\end{figure}
Our next goal is to write formulas for the index over two periods in terms of the index over one period. We can extend a periodic trajectory by its iterates. 

\begin{dfn} Let $x$ be an $N$-periodic trajectory in a billiard with $C^2$ boundary. Its \emph{$2^{nd}$ iterate} is the trajectory 
	\begin{equation} x^2 = (x^1,x^2,...x^{N+1},x^2,...x^{N+1}),
	\end{equation}
	that is, the trajectory $x$ traversed twice. Higher iterates are defined analogously. 
\end{dfn}

Now, let us look at the Hessian of the second iterate, and express it in terms of the Hessian over one period. 

\begin{prop}\label{secondIterateHessian} Let $x$ be an $N$-periodic trajectory, $H_x$ be the Hessian of $L$ at the critical point $x$, and $H_{x^2}$ the Hessian at its second iterate $x^2$. 
Let $\delta x^{(j,k)}$ denote the $j$ through $k^\text{th}$ entry of $\delta x$. Then 
\begin{equation}
	H_{x^2}(\delta x, \delta \bar{x}) = H_x(\delta x^{(1,N+1)}, \delta \bar{x}^{(1,N+1)}) + H_x(\delta x^{(N+1,2N+1)}, \delta \bar{x}^{(N+1,2N+1)})
\end{equation}
\end{prop}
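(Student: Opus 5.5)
The identity follows from the additive structure of the Lagrangian together with the periodicity of $x$; no analysis is needed beyond bookkeeping. Write the second iterate as a length-$2N$ configuration $(y^1,\dots,y^{2N+1})$ with $y^i = x^i$ for $i=1,\dots,N+1$ and $y^{N+i} = x^i$ for $i = 1,\dots,N+1$. This is consistent because $x^{N+1}=x^1$. The Lagrangian then splits as
\begin{equation}
L(y) = -\sum_{i=1}^{N}\ell(y^i,y^{i+1}) - \sum_{i=N+1}^{2N}\ell(y^i,y^{i+1}),
\end{equation}
that is, $L$ over $2N$ bounces equals $L$ applied to the first block $(y^1,\dots,y^{N+1})$ plus $L$ applied to the second block $(y^{N+1},\dots,y^{2N+1})$. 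The two blocks overlap only in the shared coordinate $y^{N+1}$.

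Differentiate this decomposition twice. Since $L$ is a sum of terms each depending on consecutive coordinates, the Hessian of a sum is the sum of Hessians. The first block contributes a bilinear form that depends only on entries $1,\dots,N+1$ of the variations. The second contributes one that depends only on entries $N+1,\dots,2N+1$.

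Next, identify each block's Hessian with $H_x$. For the first block this is immediate, since the coordinates are exactly $x$. For the second block, the coordinates $(y^{N+1},\dots,y^{2N+1}) = (x^1,\dots,x^{N+1})$ coincide with $x$, using $x^{N+1}=x^1$. Therefore the second partials $\partial_1^2S_i$, $\partial_2^2S_i$, $\partial^2_{12}S_i$ evaluated there are the same numbers that appear in $H_x$, as given in \cref{billiardHessianDiagonals} and \cref{billiardHessianOffDiagonals}. Because $\ell$ is a function on $S^1\times S^1$, the $2\pi$-ambiguity in lifting coordinates does not matter.

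It is worth making the shared entry explicit at the matrix level. The $(N+1,N+1)$ diagonal entry of $H_{x^2}$ is $\partial_2^2 S_N + \partial_1^2 S_{N+1}$. In the one-period matrix $H_x$, this equals $a_{N+1}+a_1$. So the corner entries of the two copies of $H_x$ add exactly at the overlap, while the off-diagonal entries $b_i$ are simply repeated. This confirms the block formula entry by entry, using \cref{diagonalEntries}.

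The only step that needs care, and it is minor, is this overlap at index $N+1$. One must check that the boundary-type diagonal entries $a_1$ and $a_{N+1}$ of $H_x$ are exactly the half-contributions making up the interior diagonal entry of $H_{x^2}$. This is precisely why $H_x$ is defined with one-sided entries at its ends. Once that is checked, the formula holds for all $\delta x,\delta\bar x\in T\mathcal{C}^{2N}$, and in particular for periodic variations.
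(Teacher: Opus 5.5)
Your proposal is correct and matches the paper's argument: the paper also splits the expansion of $H_{x^2}$ at index $N+1$, uses periodicity to identify $a_i=a_{i+N}$ and $b_i=b_{i+N}$, and writes the overlapping diagonal entry as $a_{N+1}=\bar a_{N+1}+\underline{a}_{N+1}$, i.e.\ as the sum of the one-sided corner entries of the two copies of $H_x$, exactly as in your overlap check. Splitting the Lagrangian into two blocks first is only a slightly more conceptual presentation of the same bookkeeping.
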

\begin{proof} We will use the notation $a_i$ and $b_i$ from \cref{billiardHessianDiagonals} and \cref{billiardHessianOffDiagonals} over the full two-period trajectory. Observe that since the trajectory is periodic, $a_i = a_{i+N}$ for $i = 2,...N-1$, and $b_i = b_{i+N}$ for $i = 1,..,N$. Next, we introduce notation $\bar{a}_{N+1} = a_1 $ and $\underline{a}_{N+1} = a_{2N+1}.$ Note that $\bar{a}_{N+1}$ and $\underline{a}_{N+1}$ correspond to the first and last diagonal terms of the Hessian over a single period, and from \cref{billiardHessianDiagonals} it follows that $\bar{a}_{N+1} + \underline{a}_{N+1} = a_{N+1}$ in the full two-period trajectory. Thus, we compute
\begin{align}
	H_{x^2}(\delta x, \delta\bar{x}) &= (a_1\delta x^1 + b_1\delta x^2)\delta\bar{x}^1 + \sum_{i=2}^{2N} (b_{i-1}\delta x^{i-1} + a_i \delta x^i + b_{i}\delta x^{i+1})\delta \bar{x}^i\\
	& \ \ \ \ \ + (b_{2N}\delta x^{2N} + a_{2N+1}\delta x^{2N+1}) \delta \bar{x}^{2N+1}\\
	& = (a_1\delta x^1 + b_1\delta x^2)\delta\bar{x}^1 + \sum_{i=2}^{N} (b_{i-1}\delta x^{i-1} + a_i \delta x^i + b_{i}\delta x^{i+1})\delta \bar{x}^i\\
	& \ \ \ \ \ + (b_{N}\delta x^{N} + \underline{a}_{N+1}\delta x^{N+1}) \delta \bar{x}^{N+1}\\&\ \ \ \ \ + (\overline{a}_{N+1}\delta x^{N+1} + b_{N+1}\delta x^{N+2})\delta \bar{x}^{N+1}\\
	& \ \ \ \ \ + \sum_{i=N+2}^{2N} (b_{i-1}\delta x^{i-1} + a_i \delta x^i + b_{i}\delta x^{i+1})\delta \bar{x}^i\\
	& \ \ \ \ \ + (b_{2N}\delta x^{2N} + a_{2N+1}\delta x^{2N+1}) \delta \bar{x}^{2N+1}\\
	&= H_x(\delta x^{(1,N+1)}, \delta \bar{x}^{(1,N+1)}) + H_x(\delta x^{(N+1,2N+1)}, \delta \bar{x}^{(N+1,2N+1)})
\end{align}
\end{proof}

\begin{prop}\label{antiperiodicPeriodicDecomposition}
Let $x$ be a periodic trajectory of a billiard with $C^2$ boundary, and assume $x^2$ is nondegenerate with respect to periodic variations. then
\begin{equation}\label{antiperiodicPeriodicEquation}i_\Delta(x^2) = i_\Delta(x) + i_{-\Delta}(x),
\end{equation} 
where $i_{-\Delta}$ is the index of $H_x$ on 
\begin{equation} 
	T_x\mathcal{C}^N_{-\Delta} = \{\delta x \in T_x\mathcal{C}^N | \delta x^1 = -\delta x^{N+1}\}  
\end{equation}
\end{prop}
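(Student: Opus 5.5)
The plan is to use the symmetry of the second iterate under the shift by one period. Identify $T_{x^2}\mathcal{C}^{2N}_\Delta$ with vectors $\delta x=(\delta x^1,\dots,\delta x^{2N+1})$ satisfying $\delta x^1=\delta x^{2N+1}$. Define the shift $\sigma$ on this space by $(\sigma\delta x)^i=\delta x^{i+N}$, with indices read modulo $2N$ and the endpoint identification respected. For a periodic variation the two halves $\delta x^{(1,N+1)}$ and $\delta x^{(N+1,2N+1)}$ are each vectors in $T_x\mathcal{C}^N$, and by \cref{secondIterateHessian} we have
\[
H_{x^2}(\delta x,\delta\bar x)=H_x(\delta x^{(1,N+1)},\delta\bar x^{(1,N+1)})+H_x(\delta x^{(N+1,2N+1)},\delta\bar x^{(N+1,2N+1)}).
\]
Since $x$ is periodic, the matrix $H_x$ depends only on the one-period data $a_i,b_i$. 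Swapping the two halves is therefore an involution $\sigma$ preserving $H_{x^2}$ on periodic variations.

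Next, decompose the space of periodic variations into the $\pm1$ eigenspaces of $\sigma$. The $+1$ eigenspace $W_+$ consists of $\delta x$ whose two halves coincide: $\delta x^{(1,N+1)}=\delta x^{(N+1,2N+1)}=u$. The conditions $\delta x^1=\delta x^{N+1}=\delta x^{2N+1}$ force $u\in T_x\mathcal{C}^N_\Delta$, and on $W_+$ we get $H_{x^2}=2H_x(u,u)$.

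The $-1$ eigenspace $W_-$ consists of $\delta x$ with $\delta x^{(N+1,2N+1)}=-\delta x^{(1,N+1)}$ entrywise. Writing $u=\delta x^{(1,N+1)}$, this means $\delta x^{N+1+k}=-u^{1+k}$. Then $\delta x^{N+1}=u^{N+1}$ and also $\delta x^{N+1}=-u^1$, so $u^{N+1}=-u^1$, i.e.\ $u\in T_x\mathcal{C}^N_{-\Delta}$. Periodicity $\delta x^{2N+1}=-u^{N+1}=u^1=\delta x^1$ then holds automatically. The second half is $-u$, so $H_{x^2}=H_x(u,u)+H_x(-u,-u)=2H_x(u,u)$ on $W_-$.

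The maps $u\mapsto\delta x$ are linear isomorphisms $T_x\mathcal{C}^N_{\pm\Delta}\to W_\pm$. A dimension count confirms that $W_+\oplus W_-$ is the whole space: $N+N=2N=\dim T_{x^2}\mathcal{C}^{2N}_\Delta$. Indeed $W_+\cap W_-=0$, and any periodic $\delta x$ splits as $\tfrac12(\delta x+\sigma\delta x)+\tfrac12(\delta x-\sigma\delta x)$.

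Then apply \cref{indexSubspace} with $W=W_+$. Since $\sigma$ is an $H_{x^2}$-isometric involution, $W_+$ and $W_-$ are $H_{x^2}$-orthogonal: for $v\in W_+$ and $w\in W_-$,
\[
H(v,w)=H(\sigma v,\sigma w)=-H(v,w).
\]
Hence $W_-\subset W_+^\perp$. Nondegeneracy of $H_{x^2}$ on the periodic space gives $\dim W_+^\perp=\dim W_-$, so $W_+^\perp=W_-$ and $W_+\cap W_+^\perp=0$. Nondegeneracy also gives $W_+\cap\ker H_{x^2}=0$. The formula then reduces to
\[
\ind H_{x^2}=\ind H_{x^2}|_{W_+}+\ind H_{x^2}|_{W_-}.
\]
Finally, since $H_{x^2}$ restricted to $W_\pm$ equals $2H_x$ on $T_x\mathcal{C}^N_{\pm\Delta}$, and the factor $2$ does not change the index, these two terms are $i_\Delta(x)$ and $i_{-\Delta}(x)$.

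The main obstacle is the bookkeeping that $\sigma$ is well defined and $H_{x^2}$-invariant on periodic variations. This has to be checked at the shared index $N+1$, where \cref{secondIterateHessian} splits $a_{N+1}=\bar a_{N+1}+\underline a_{N+1}$. I expect it to be cleanest to verify invariance directly from the half-decomposition formula, since swapping halves just permutes the two $H_x$ terms. Periodicity of $\delta x$ ensures the swapped vector still has matching entries at $1$, $N+1$ and $2N+1$ once one defines $(\sigma\delta x)^{(1,N+1)}=\delta x^{(N+1,2N+1)}$ and vice versa.
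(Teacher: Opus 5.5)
Your proposal is correct and follows essentially the same route as the paper. Both split the $2N$-periodic variations into the $N$-periodic subspace (identified with $T_x\mathcal{C}^N_\Delta$) and the $N$-antiperiodic subspace (identified with $T_x\mathcal{C}^N_{-\Delta}$), show these are $H_{x^2}$-orthogonal, use nondegeneracy of $x^2$ and a dimension count to identify the antiperiodic subspace with $W^\perp$, and then apply \cref{indexSubspace}, noting that the factor $2$ does not change the index. The only difference is cosmetic: you obtain orthogonality from the half-swap isometric involution $\sigma$, whereas the paper substitutes directly into the formula of \cref{secondIterateHessian}. Your version makes clear that only the easy inclusion $W_-\subset W_+^\perp$ is needed before the dimension count.
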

\begin{proof} We begin by defining the subspace
\begin{equation}W = \{\delta x \in T_{x^2}\mathcal{C}_\Delta^{2N} | \delta x^i = \delta x^{i+N}, \ i = 1,...N+1 \}
\end{equation}
of $T_{x^2}\mathcal{C}_\Delta^{2N}$ consisting of $N$-periodic variations, which we identify with $T_x\mathcal{C}_\Delta^N$. The complement of $W$ with respect to the form $H_{x^2}$ is 
\begin{equation}
	W^\perp = \{\delta \bar{x} \in T_{x^2}\mathcal{C}_\Delta^{2N} | H_{x^2}(\delta x, \delta \bar{x}) = 0 \textup{ for all } \delta x \in W \}.
\end{equation}
We can explicitly describe the vectors in $W^\perp$. Using the formula from proposition 2.14, 
\begin{equation}
	H_{x^2}(\delta x, \delta \bar{x}) = H_x(\delta x^{1,N+1}, \delta \bar{x}^{1,N+1} + \delta \bar{x}^{N+1,2N+1}).
\end{equation}
This vanishes for all $\delta x \in W$ only when $\delta \bar{x}^{1,N+1} + \delta \bar{x}^{N+1,2N+1} = 0$, and such $\delta \bar{x}$ comprise an $N$-dimensional subspace of the $2N$-dimensional space $T_{x^2}\mathcal{C}_\Delta^{2N}$. Since $H_{x^2}$ is nondegenerate and $W$ is also $N$-dimensional, a dimension count tells us these vectors are precisely $W^\perp$,
\begin{align}
	W^\perp &= \{\delta x \in T_{x^2}\mathcal{C}_\Delta^{2N} | \delta x^{1,N+1} + \delta x^{N+1,2N+1} = 0\}\\
	& = \{\delta x \in T_{x^2}\mathcal{C}_\Delta^{2N} | \delta x^i = -\delta x^{i+N}, \ i=1,...N+1\},
\end{align}
and immediately $W \cap W^\perp = \{0\}$. 
The space $W^\perp$ may be identified with anti-periodic variations $\delta x \in T_x\mathcal{C}^N_{-\Delta}$.  

Again using the formula from \cref{secondIterateHessian}, identifying $W$ with $T_x\mathcal{C}_\Delta^N$ and $W^\perp$ with $T_x\mathcal{C}_{-\Delta}^N$,
\begin{align}
	&H_{x^2}|_W = 2H_x|_{T_x\mathcal{C}_\Delta^N}\\
	&H_{x^2}|_{W^\perp} = 2H_x|_{T_x\mathcal{C}_{-\Delta}^N}.
\end{align}
In summary,
\begin{align} 
	&\ind H_{x^2}|_W = \ind H_x|_{T_x\mathcal{C}^N_\Delta}\\
	&\ind H_{x^2}|_{W^\perp} = \ind H_x|_{T_x\mathcal{C}^N_{-\Delta}}\\
	& \dim (W \cap W^\perp) = 0\\
	& \dim \left(W \cap \ker H_{x^2}|_{T_{x^2}\mathcal{C}^{2N}_\Delta}\right) = 0. 
\end{align}
\Cref{antiperiodicPeriodicEquation} then follows from \cref{indexSubspace}.
\end{proof} 

\noindent Note that $T_x\mathcal{C}^N_{-\Delta}$ is the space of variations about $x$ that are anti-periodic in their configuration component. As mentioned before, we are adopting the extrinsic view that this is not the tangent space to some path space submanifold $\mathcal{C}^N_{-\Delta}$, but rather a subspace of the tangent space $T_x\mathcal{C}^N$.

In a similar fashion to \cref{concavityVector}, we can further decompose $i_{-\Delta}(x)$ into a contribution from the subspace of fixed-endpoint variations, as well as a contribution from a particular anti-periodic variation. As a complement to the concavity, we will call the contribution from the anti-periodic variations the convexity. 
\begin{dfn} Let $x$ be a nondegenerate critical point with respect to anti-periodic variations. We define the \emph{convexity} of $x$, denoted $C_{-\Delta}(x)$ to be the difference between $i_{-\Delta}(x)$ and $i_0(x)$. That is, $C_{-\Delta}(x)$ satisfies
\begin{equation}i_{-\Delta}(x) = i_0(x)+C_{-\Delta}(x).
\end{equation}
\end{dfn}
Again, we can characterize this contribution in terms of a particular Jacobi variation. The proof of \cref{convexityVector} is analogous to that of \cref{concavityVector} and is omitted for brevity. %

\begin{prop}\label{convexityVector}
Let $x$ be a periodic trajectory of a convex billiard which is nondegenerate with respect to antiperiodic variations. Up to scaling, there is a unique antiperiodic Jacobi variation solving $H_x(\delta x_{-\Delta}, \cdot) = (c^1,0,...,0,c^2).$ The concavity is then given by
\begin{equation}
	C_{-\Delta}(x) = \begin{cases} 1, \text{ if } H_x(\delta x_{-\Delta}, \delta x_{-\Delta})\leq 0\\
		0, \text{ if } H_x(\delta x_{-\Delta}, \delta x_{-\Delta}) >0.
	\end{cases}
\end{equation}
\end{prop}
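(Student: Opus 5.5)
The plan is to mirror the proof of \cref{concavityVector}, with $T_x\mathcal{C}^N_{-\Delta}$ in place of $T_x\mathcal{C}^N_\Delta$. Throughout, $H_x$ is regarded as a form on the $N$-dimensional space $U = T_x\mathcal{C}^N_{-\Delta}$. Fixed-endpoint variations satisfy $\delta x^1 = \delta x^{N+1} = 0 = -\delta x^{N+1}$, so $W = T_x\mathcal{C}^N_0$ is a subspace of $U$ of dimension $N-1$.

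\textbf{Step 1: existence and uniqueness of $\delta x_{-\Delta}$.} The hypothesis is that $H_x$ is nondegenerate on $U$. For a nondegenerate form, $\dim W + \dim W^\perp = \dim U$, where $W^\perp$ is taken inside $U$. Hence $W^\perp$ is one-dimensional; let $\delta x_{-\Delta}$ span it.

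\textbf{Step 2: $\delta x_{-\Delta}$ is a Jacobi variation.} Since $H_x(\delta x_{-\Delta},\cdot)$ vanishes on $W$, and $W$ consists of the vectors supported on entries $2,\dots,N$, the rows $2,\dots,N$ of $H_x\delta x_{-\Delta}$ vanish. This is exactly \cref{variationalSL} with $\mu=0$, so $H_x\delta x_{-\Delta} = (c^1,0,\dots,0,c^2)^T$. Uniqueness up to scaling follows from the one-dimensionality in Step 1.

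\textbf{Step 3: nondegeneracy constraint on $c^1,c^2$.} Pair $\delta x_{-\Delta}$ against an arbitrary $\delta\bar x\in U$:
\[
H_x(\delta x_{-\Delta},\delta\bar x) = c^1\delta\bar x^1 + c^2\delta\bar x^{N+1} = (c^1 - c^2)\,\delta\bar x^1 .
\]
Nondegeneracy on $U$ therefore forces $c^1\neq c^2$; this replaces the condition $c^1\neq -c^2$ in the periodic case. In particular,
\[
H_x(\delta x_{-\Delta},\delta x_{-\Delta}) = (c^1-c^2)\,\delta x_{-\Delta}^1 .
\]
This vanishes exactly when $\delta x_{-\Delta}^1=0$, i.e.\ when $\delta x_{-\Delta}\in W$, i.e.\ when $x$ is degenerate with respect to fixed-endpoint variations.

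\textbf{Step 4: apply \cref{indexSubspace}.} With $W=T_x\mathcal{C}^N_0$ and $U = T_x\mathcal{C}^N_{-\Delta}$,
\[
i_{-\Delta}(x) = i_0(x) + \ind H_x|_{W^\perp} + \dim(W\cap W^\perp) - \dim(W\cap\ker H_x|_U),
\]
and the last term is zero by nondegeneracy on $U$. Two cases remain.

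\emph{Case $H_x(\delta x_{-\Delta},\delta x_{-\Delta})\neq 0$.} Then $\delta x_{-\Delta}\notin W$, so $W\cap W^\perp=0$. The index of $H_x$ on the line $W^\perp$ is $1$ or $0$ according to the sign of $H_x(\delta x_{-\Delta},\delta x_{-\Delta})$.

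\emph{Case $H_x(\delta x_{-\Delta},\delta x_{-\Delta})= 0$.} Then $W^\perp\subset W$, so $\dim(W\cap W^\perp)=1$, while $\ind H_x|_{W^\perp}=0$. This gives $C_{-\Delta}(x)=1$, matching the ``$\le 0$'' branch of the statement.

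The main point of care is Step 3. One must check that nondegeneracy on the antiperiodic space, rather than on the periodic space, is precisely what controls the boundary terms. One must also note that $W\cap W^\perp\neq 0$ does not contradict nondegeneracy of $H_x$ on $U$: a vector in $W\cap W^\perp$ is orthogonal to $W$ and to itself, but it is not orthogonal to all of $U$, since $c^1\neq c^2$ and $U$ contains vectors with $\delta\bar x^1\neq 0$.
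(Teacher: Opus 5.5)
Your proof is correct and takes the route the paper intends. The paper omits this proof as analogous to \cref{concavityVector}, and you carry out that analogy with $T_x\mathcal{C}^N_{-\Delta}$ in place of $T_x\mathcal{C}^N_\Delta$, decomposing via \cref{indexSubspace} with $W=T_x\mathcal{C}^N_0$ (and with the correct minus sign on the kernel term). Your Step 3 identity $H_x(\delta x_{-\Delta},\delta x_{-\Delta})=(c^1-c^2)\,\delta x_{-\Delta}^1$, with $c^1\neq c^2$, explicitly justifies the ``vanishes iff fixed-endpoint Jacobi'' equivalence, which the paper merely asserts in the periodic case.
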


Finally, as an immediate corollary of \cref{antiperiodicPeriodicDecomposition}, \cref{concavityVector}, and \cref{convexityVector}, we obtain
\begin{prop}\label{indexIterationConvexityConcavity} Let $x$ be a periodic trajectory of a billiard with $C^2$ boundary. Assume $x^2$ is nondegenerate with respect to periodic variations, and further that $x$ is nondegenerate with respect to periodic and antiperiodic variations. Then 
\begin{equation}
	i_\Delta(x^2) = 2i_0(x) + C_\Delta(x) + C_{-\Delta}(x).
\end{equation} 
\end{prop}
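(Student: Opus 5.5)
The plan is to chain the three cited results together. First, \cref{antiperiodicPeriodicDecomposition} applies directly, because $x^2$ is assumed nondegenerate with respect to periodic variations. It gives
\begin{equation}
	i_\Delta(x^2) = i_\Delta(x) + i_{-\Delta}(x).
\end{equation}
Next, since $x$ is assumed nondegenerate with respect to periodic variations, the concavity $C_\Delta(x)$ is defined, and by definition
\begin{equation}
	i_\Delta(x) = i_0(x) + C_\Delta(x).
\end{equation}
Likewise, nondegeneracy with respect to antiperiodic variations makes the convexity $C_{-\Delta}(x)$ well defined, with
\begin{equation}
	i_{-\Delta}(x) = i_0(x) + C_{-\Delta}(x).
\end{equation}
Adding these two identities and substituting into the first gives $2i_0(x) + C_\Delta(x) + C_{-\Delta}(x)$, which is the claim.

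The only substantive point is checking that the nondegeneracy hypotheses line up with what each ingredient needs. This is also where I expect the real obstacle, because \cref{antiperiodicPeriodicDecomposition} relies on \cref{indexSubspace} with $\dim(W\cap \ker H_{x^2}) = 0$. I would confirm this using the hypothesis on $x^2$, as in its proof.

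Read purely through the definitions, the statement is tautological. To make it meaningful, I would also invoke \cref{concavityVector} and \cref{convexityVector}, so that $C_\Delta(x), C_{-\Delta}(x) \in \{0,1\}$ can be computed independently from the signs of $H_x(\delta x_{\pm\Delta},\delta x_{\pm\Delta})$. Both propositions decompose via \cref{indexSubspace} with $W = T_x\mathcal{C}^N_0$ inside $T_x\mathcal{C}^N_{\pm\Delta}$. There, $\dim (T_x\mathcal{C}^N_0)^\perp = 1$, and the last term vanishes by the assumed nondegeneracy on $T_x\mathcal{C}^N_{\pm\Delta}$.

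One subtlety I would flag is that the same $i_0(x)$ appears in both decompositions. This holds because the fixed-endpoint subspace $T_x\mathcal{C}^N_0$ is common to $T_x\mathcal{C}^N_\Delta$ and $T_x\mathcal{C}^N_{-\Delta}$, and $H_x$ restricted to it is the same form in both cases. Hence the two $i_0(x)$ terms combine to $2i_0(x)$.
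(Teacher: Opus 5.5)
Your proposal is correct and follows the paper's argument. The paper records this result as an immediate corollary: it substitutes $i_{\pm\Delta}(x) = i_0(x) + C_{\pm\Delta}(x)$ into $i_\Delta(x^2) = i_\Delta(x) + i_{-\Delta}(x)$ from \cref{antiperiodicPeriodicDecomposition}, with \cref{concavityVector} and \cref{convexityVector} giving $C_{\pm\Delta}$ their concrete meaning, and your remark that both decompositions share the same restricted form on $T_x\mathcal{C}^N_0$ correctly justifies combining the two $i_0(x)$ terms into $2i_0(x)$.
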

\noindent Thus, the parity of $i_{\Delta}(x^2)$ depends entirely on the behavior of the convexity and concavity contributions. 

\subsection{Stability of Periodic Trajectories} 
Beyond billiards, there are many results relating the Morse index of a periodic trajectory of a Lagrangian dynamical system to its stability. This line of work goes at least as far back as  Poincar\'e, who proved that minimizing closed geodesics on surfaces are unstable. As a sampling of further progress, Ballman, Thorbergsson and Ziller used techniques from Morse theory to find conditions on the curvature of manifolds ensuring the existence of an elliptic-parabolic closed geodesic \cite{BTZ:geodesics}, Offin proved more general conditions under which minimizing geodesics are unstable \cite{Offin:HyperbolicGeodesics}, and Offin and Deng proved a stability alternative in low-dimensional systems using the second iterate of a periodic orbit \cite{D-O:czindex}. These developments appeal to some symplectic geometry machinery, particularly the relationship between the Morse index of a trajectory and the Maslov index of the linearized Hamiltonian flow along the lift of the trajectory to the cotangent space. In a discrete mechanics setting, MacKay and Meiss proved that periodic trajectories with index 0 have positive real reciprocal multipliers and are linearly unstable, and that periodic trajectories with index 1 have negative real reciprocal multipliers or multipliers on the unit circle \cite{MackayMeiss:LinearStability}. Finally, Kozlov and Treshchev directly relate the signature of the Hessian to the multipliers of the Poincar\'e matrix for the linearized Hamiltonian flow, and also classify stability using a second iterate criterion \cite{KozTre:billiards}. We will combine Offin and Deng's or Kozlov and Treschev's argument with MacKay and Meiss' result to prove a stability alternative in low-dimensional discrete systems using the second iterate of a periodic orbit.

First, to discuss stability, we state necessary definitions and facts from Hamiltonian mechanics and dynamical systems. 

\begin{dfn} Let $(x^i,y^i)$ be an $N$-periodic trajectory of a discrete Hamiltonian system or symplectic mapping $\Psi$. The \emph{multipliers} of the map $\Psi^N$ at $(x^0,y^0)$ are the eigenvalues of the differential $d\Psi^N$. 
\end{dfn}
The eigenvalues of $d\Psi^N$ are coordinate-independent so the multipliers are well-defined. The multipliers determine the linear stability or instability of the periodic trajectory. 
\begin{dfn} We classify the linear stability of the periodic trajectory $(x^i,y^i)$ as 
\begin{itemize}
	\item \emph{Linearly stable}, if every eigenvalue of $d\Psi^N$ has modulus less than or equal to 1,
	\item \emph{Linearly unstable}, if it is not linearly stable, or equivalently, if any eigenvalue of $d\Psi^N$ has modulus greater than 1.
\end{itemize}
\end{dfn}

Because $\Psi$ is a local symplectomorphism of an annulus, $d\Psi^N$ is a linear symplectomorphism at each periodic point. Eigenvalues of symplectic matrices occur in quadruplets; if $\lambda$ is an eigenvalue, so are $\lambda^{-1}, \ \bar{\lambda}$, and $\bar{\lambda}^{-1}$. As a consequence, a periodic billiard trajectory may be linearly stable, but cannot be linearly asymptotically stable. 

For a one degree-of-freedom Lagrangian system, or two-dimensional Hamiltonian system, the multipliers are either nonzero real reciprocals or complex conjugates on the unit circle. Using this information, MacKay and Meiss' relate the Morse index of periodic trajectory to its stability. We state their result below, specifically adapted to billiards. 

\begin{prop}\label{MacKayMeiss} Let $x$ be a nondegenerate $N$-periodic trajectory of a billiard with $C^2$ boundary. If $i_\Delta(x)$ is even, the multipliers of the billiard map $T^N$ are real and positive. If $i_\Delta(x)$ is odd, then the multipliers of the billiard map $T^N$ are real and negative or on the unit circle in the complex plane. 
\end{prop}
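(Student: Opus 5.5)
The plan is to express the characteristic polynomial of $dT^N$ at the periodic point through the determinant of the periodic Hessian, and then read the sign of that determinant from the parity of $i_\Delta(x)$.

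\emph{Setting up.} Write $H_\Delta$ for the matrix of $H_x$ restricted to $T_x\mathcal{C}^N_\Delta$. In the coordinates $\delta x^1=\delta x^{N+1},\delta x^2,\dots,\delta x^N$ it is the $N\times N$ periodic Jacobi matrix. Its diagonal is $a_1+a_{N+1},a_2,\dots,a_N$, the entries $b_1,\dots,b_{N-1}$ sit on the off-diagonals, and $b_N$ appears in the two corners. Nondegeneracy gives $\det H_\Delta\neq 0$. Since $H_\Delta$ is symmetric and nonsingular, $\operatorname{sign}\det H_\Delta=(-1)^{i_\Delta(x)}$.

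\emph{Linking the determinant to $dT^N$.} Write $M=dT^N$ in the coordinates $(x,y)$ with $y=-\partial_1 S$. Each step $dT$ is obtained from the generating function $S_i$ through the linearized relations $\delta y^i=-(\partial_1^2S_i\,\delta x^i+\partial^2_{12}S_i\,\delta x^{i+1})$ and $\delta y^{i+1}=\partial^2_{21}S_i\,\delta x^i+\partial^2_{2}S_i\,\delta x^{i+1}$. This gives the transfer matrix
\begin{equation}
dT_i=\m{-\partial_1^2S_i/b_i & -1/b_i\\ b_i-\partial_1^2S_i\partial_2^2S_i/b_i & -\partial_2^2S_i/b_i},
\end{equation}
which has determinant $1$. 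I would then prove the discrete Bott--Hill identity
\begin{equation}
\det H_\Delta=(-1)^{N+1}\Big(\prod_{i=1}^N b_i\Big)\,(2-\operatorname{tr}M),
\end{equation}
possibly up to an overall sign convention that I will fix by checking $N=1,2$. To prove it, solve $H_x\delta x=(c^1,0,\dots,0,c^2)$ by the recurrence \cref{forwardRecurrence}, so that a Jacobi variation corresponds to a solution of $(\delta x,\delta y)\mapsto M(\delta x,\delta y)$. A vector lies in $\ker H_\Delta$ exactly when it is a Jacobi variation with $\delta x^1=\delta x^{N+1}$ and $c^1+c^2=0$, that is, $\delta y^1=\delta y^{N+1}$, which means $M$ has eigenvalue $1$. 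Both sides are polynomials in the entries, so after checking the kernel correspondence the identity follows by comparing leading coefficients, or by a direct induction on $N$ expanding the cyclic tridiagonal determinant along its continuant structure. I expect this identity, and in particular getting the sign $(-1)^{N+1}\prod b_i$ exactly right, to be the main obstacle. I would first try the standard continuant expansion of the periodic Jacobi determinant: the $2\times 2$ product $\prod dT_i$ has entries that are continuants divided by $\prod b_i$, and the corner terms contribute $-2\prod b_i$.

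\emph{Conclusion.} Every $b_i<0$, so $(-1)^{N+1}\prod b_i=(-1)^{N+1}(-1)^N|\prod b_i|=-|\prod b_i|$, and hence $\operatorname{sign}(\operatorname{tr}M-2)=\operatorname{sign}\det H_\Delta=(-1)^{i_\Delta(x)}$. Now use the eigenvalue dichotomy for $2\times2$ symplectic matrices stated above: the multipliers are $\lambda,\lambda^{-1}$ with $\operatorname{tr}M=\lambda+\lambda^{-1}$. If $i_\Delta$ is even, then $\operatorname{tr}M>2$, which forces real positive multipliers. If $i_\Delta$ is odd, then $\operatorname{tr}M<2$. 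Either $\operatorname{tr}M\in[-2,2)$, giving multipliers on the unit circle, or $\operatorname{tr}M<-2$, giving real negative multipliers.
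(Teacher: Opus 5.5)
Your proposal is correct and follows essentially the paper's route. The paper cites MacKay and Meiss's relation between $2-\operatorname{tr} dT^N$ and $\det H_\Delta$, then observes that $\operatorname{sign}\det H_\Delta=(-1)^{i_\Delta(x)}$; you do the same but also derive the identity yourself. Your sign $(-1)^{N+1}\prod b_i$ is right, as the $N=1,2$ checks confirm. Of your two suggested proofs of the identity, use the continuant/transfer-matrix expansion: the ``same zero set, compare leading coefficients'' argument is not rigorous as stated.
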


In their paper, MacKay and Meiss work with more general dynamical systems arising from a variational principle and only claim that if $i_\Delta(x) = 0$ the multipliers of $T$ are real and positive, and if $i_\Delta(x) = 1$, the multipliers of $T^N$ are real and negative or on the unit circle in the complex plane. They relate the multipliers to an expression depending on the sign of the determinant of the Hessian matrix at the periodic trajectory. Since the determinant of this matrix is the product of the eigenvalues, and the signs of the eigenvalues of a matrix determine the signature of the associated quadratic form, the sign of the determinant of the Hessian when $i_\Delta(x)$ is even is the same as when $i_\Delta(x) = 0$, and the sign of the determinant when $i_\Delta(x)$ is odd is the same as when $i_\Delta(x) = 1$, so the stronger statement proposed above is automatic from the original proof.

Using Offin and Deng's or Kozlov and Treshschev's argument, we can extend this result to determine the stability of a periodic trajectory from the index of its second iterate. 

\begin{prop}\label{doubleCover}Let $x$ be an $N$-periodic trajectory in a billiard with $C^2$ boundary, and suppose its second iterate $x^2$ is nondegenerate. Then if $i_\Delta(x^2)$ is respectively odd or even, the multipliers are respectively complex conjugates on the unit circle or real reciprocals. Consequently, periodic trajectory is respectively linearly stable or linearly unstable. 
\end{prop}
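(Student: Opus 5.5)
We apply \cref{MacKayMeiss} to the second iterate $x^2$, regarded as a $2N$-periodic trajectory, and then transfer the conclusion about the multipliers of $T^{2N}$ back to those of $T^N$ by squaring. Let $\lambda,\lambda^{-1}$ denote the multipliers of $dT^N$ at $(x^1,y^1)$. The multipliers of $dT^{2N}=(dT^N)^2$ are then $\lambda^2,\lambda^{-2}$. Since $dT^N$ is a $2\times 2$ real symplectic matrix, there are only three possibilities: $\lambda$ is real and positive, $\lambda$ is real and negative, or $\lambda$ lies on the unit circle.

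First step: apply \cref{MacKayMeiss} to $x^2$, which is nondegenerate by hypothesis. If $i_\Delta(x^2)$ is even, the multipliers $\lambda^2,\lambda^{-2}$ are real and positive. If $i_\Delta(x^2)$ is odd, they are real and negative or on the unit circle.

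Second step: in the odd case, $\lambda^2$ cannot be real and negative. If $\lambda$ is real, then $\lambda^2>0$. If $\lambda$ is on the unit circle, then so is $\lambda^2$. Hence $\lambda^2$ lies on the unit circle. From this, $|\lambda|=1$, because a real $\lambda$ would give $\lambda^2>0$ and force $\lambda^2=1$, so $\lambda=\pm 1$. In every subcase $\lambda$ is on the unit circle, and $\lambda,\bar\lambda=\lambda^{-1}$ are complex conjugates there, so $x$ is linearly stable.

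In the even case, $\lambda^2$ is real and positive. This excludes $\lambda$ being non-real on the unit circle, except when $\lambda^2=1$, i.e.\ $\lambda=\pm1$. So $\lambda$ is real and the multipliers are real reciprocals $\lambda,\lambda^{-1}$.

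Third step, and the main obstacle: concluding linear instability in the even case requires $\lambda\neq\pm1$, i.e.\ $|\lambda|>1$ for one of the pair. I would argue via nondegeneracy. A multiplier of $dT^{2N}$ equal to $1$ means there is a $2N$-periodic solution of the linearized map, i.e.\ a nonzero periodic Jacobi variation in the kernel of $H_{x^2}$ restricted to $T_{x^2}\mathcal{C}^{2N}_\Delta$. To justify this, I would take an eigenvector $(\delta x^1,\delta y^1)$ of $dT^{2N}$ with eigenvalue $1$ and propagate it by the linearized recurrence \cref{variationalSL} with $\mu=0$. This produces a variation with $\delta x^1=\delta x^{2N+1}$ and matching boundary terms $\partial_1^2S_1\delta x^1+\partial^2_{12}S_1\delta x^2=-(\partial^2_{21}S_{2N}\delta x^{2N}+\partial_2^2S_{2N}\delta x^{2N+1})$, using $\delta y=-\partial_1(\cdot)$ linearized. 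Such a variation lies in the kernel, which contradicts the assumed nondegeneracy of $x^2$.

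Since $(\pm1)^2=1$, neither $\lambda=1$ nor $\lambda=-1$ is possible. Thus in the even case $\lambda$ is real with $|\lambda|\neq1$, one of $\lambda^{\pm1}$ has modulus greater than $1$, and $x$ is linearly unstable. In the odd case the same argument also excludes $\lambda=\pm1$, so the multipliers are genuinely non-real conjugates on the unit circle.
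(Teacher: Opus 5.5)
Your proposal is correct and follows essentially the same route as the paper: apply \cref{MacKayMeiss} to the $2N$-periodic trajectory $x^2$, use that the multipliers of $dT^{2N}=(dT^N)^2$ are the squares of those of $dT^N$, and sort out the cases by parity. The only difference is that you justify a step the paper merely asserts, namely that nondegeneracy of $x^2$ excludes $\lambda^2=1$: a fixed vector of $dT^{2N}$ yields an element of the kernel of $H_{x^2}$ on periodic variations, which is nonzero because $\partial^2_{12}S_1=b_1\neq 0$, and you use this in both parity cases.
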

\begin{proof} The differential $dT^{2N}$ is the composition of the differentials $dT^N$, so the multipliers of $T^{2N}$ at $(x^0,y^0)$ are the squares of the multipliers of $T^N$ at $(x^0,y^0)$. Using \cref{MacKayMeiss}, we know that if the index is odd, the multipliers are either negative real reciprocals or complex conjugates on the unit circle; the former is impossible as negative real reciprocals cannot be the squares of real reciprocals nor complex conjugates on the unit circle, with the exception of the case in which the multipliers of $T^{2N}$ are both $-1$ and the multipliers of $T^N$ are $\pm i$. In either case, the multipliers of the second iterate and the original trajectory must be complex conjugates on the unit circle. We also know that if the index is even, the multipliers of $T^{2N}$ are positive real reciprocals, and since the trajectory is nondegenerate, they are not equal to 1; they cannot be squares of complex conjugates on the unit circle, and must be squares of negative or positive real reciprocals not equal to $\pm 1$. 
\end{proof}

\subsection{Existence of Periodic Trajectories}
We recall a result due to Birkhoff which guarantees the existence of both minimizing and mountain pass periodic trajectories in a convex billiard with $C^2$ boundary. We begin by defining the rotation number, which intuitively tells us how many times a periodic trajectory ``goes around the boundary" of the billiard table. 
\begin{dfn} Let $\gamma$ be the boundary of a $C^2$, convex billiard table, and suppose $\gamma$ has length $S$. Let $x^1,...,x^{N+1} \in \mathcal{C}^N_\Delta$ be the coordinates of an $N$-periodic billiard trajectory. For each $i$, we can write $x^{i+1} = x^i+r^i$ modulo $s$, for $r^i \in (0,S)$. We define the \emph{rotation number} $\rho$ of a periodic trajectory as $r^1+...+r^N \in \bb{Z}$. 
\end{dfn} 
\noindent Next, we sketch Birkhoff's argument, which proves the existence of many periodic trajectories. 
\begin{prop}\label{birkhoffExistence} For a given $N$, and $\rho \leq \lfloor (N-1)/2\rfloor$ coprime to $N$, there are at least two geometrically distinct $N-$periodic trajectories with winding number $\rho$. 
\end{prop}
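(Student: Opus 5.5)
We follow Birkhoff's variational argument: produce one $N$-periodic trajectory of rotation number $\rho$ as a global minimizer of $L=-\sum\ell_i$ (a maximizer of perimeter), and a second one of mountain-pass type via \cref{mtnPassBoundary}.

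\textbf{Configuration space.} Lift to $\bb{R}$ and consider configurations $\tilde x^1\le \tilde x^2\le\cdots\le \tilde x^{N+1}=\tilde x^1+\rho S$, ordered cyclically with rotation number $\rho$. Modulo the cyclic relabeling $x^i\mapsto x^{i+1}$ and the deck translation, this is a compact set $M$: a closed simplex-like region in $(S^1)^N$. Since $\gamma$ is convex, every chord is admissible, so $L$ is continuous on $M$ and smooth away from the collision faces $x^i=x^{i+1}$.

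\textbf{First trajectory.} By compactness, $L$ attains its minimum on $M$, i.e.\ the inscribed $N$-gon with rotation $\rho$ has maximal perimeter there. We must show the minimizer lies in the interior. Suppose some $x^i=x^{i+1}$. Because $\rho\le\lfloor (N-1)/2\rfloor$ and $\gcd(\rho,N)=1$, the degenerate polygon is really a polygon with fewer vertices. We then split the coincident vertex and push one copy slightly forward along $\gamma$. By the triangle inequality, together with strict convexity near the chord, this strictly increases total length to first order. Equivalently, the gradient of $L$ is inward-pointing on $\partial M$. An interior minimizer is a critical point, so by the discrete Euler--Lagrange equations \cref{discreteE-L} with $R=\Delta$ it is a periodic billiard trajectory. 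Coprimality ensures it is not an iterate of a shorter orbit, so it has minimal period $N$ and rotation number $\rho$.

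\textbf{Second trajectory.} The cyclic shift $x^i\mapsto x^{i+1}$ (reindexing) gives another minimizer $x_1$. It is a distinct point of $M$ but represents the same geometric orbit. We apply \cref{mtnPassBoundary} to $L$ on $M$ with minima $x_0$ and $x_1$; the inward-gradient condition was verified above. This produces a critical point $x_2$ with value $c=\inf_\Gamma\max L$.

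If $x_2$ is geometrically distinct from $x_0$, we are done. Otherwise, suppose every critical point at level $\min L$ is a shift of one orbit. Those are isolated minima, and then the mountain-pass level satisfies $c>\min L$ strictly; this is where the fact that the minima are strict local minima matters. A critical point at a strictly higher level cannot be a reindexing of $x_0$, since all reindexings have the same length. In the degenerate alternative, where a whole continuum of minimizers exists, we obtain infinitely many geometrically distinct trajectories directly.

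\textbf{Main obstacle.} The delicate step is the boundary analysis: showing that $\nabla L$ points inward on the collision faces $x^i=x^{i+1}$. $L$ is only $C^1$-degenerate there, since $\ell$ is not differentiable at the diagonal, and the hypotheses on $\rho$ and $N$ are needed. To handle this, we would compute the one-sided derivative of $\ell(x^{i},x^{i+1})+\ell(x^{i+1},x^{i+2})$ as $x^{i+1}$ separates from $x^i$. This derivative equals $\cos(0)-\cos(\text{angle})>0$, which shows the perimeter strictly increases. We then smooth $M$ slightly inside the collision faces so that \cref{mtnPassBoundary} applies.
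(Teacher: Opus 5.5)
Your proof is the paper's argument. You take the closure of the rotation-$\rho$ component and show that splitting a collision increases length, so the minimizer of $L$ lies in the interior. You then use the reindexing $x^i\mapsto x^{i+1}$ to get a second minimum and apply \cref{mtnPassBoundary} between the two. You are more careful than the paper's sketch in two places it skips. First, you use coprimality to get minimal period $N$. Second, you observe that if the only minimizers are the $N$ reindexings, they are strict minima, so $c>\min L$ and the pass point cannot be a reindexing of $x_0$. Your plan to shrink $M$ away from the collision faces, because $\ell$ is not differentiable on the diagonal, is also a detail the paper omits.

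Several setup slips need fixing.
\begin{itemize}
\item You must not quotient $M$ by the cyclic relabeling. In that quotient $x_1$ and $x_0$ are the same point, so the mountain pass has nothing to connect. Quotient only by the simultaneous deck translation $\tilde x\mapsto \tilde x+S$.
\item Your lifted description needs $\tilde x^{i+1}-\tilde x^i\le S$, which is the paper's $r^i\in(0,S)$. Without it, for $\rho\ge 2$ your set contains polygons of smaller rotation number, so the minimizer is not guaranteed to have rotation number $\rho$. It also contains points with $\tilde x^{i+1}-\tilde x^i=S$; these are collisions in $S^1$ lying in the interior of $M$, where $L$ is not smooth. With the constraint added, the faces $r^i=S$ become boundary faces and need the analogous splitting estimate.
\item Your one-sided derivative shows that $L$ decreases as you move into $M$. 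So it is $-\nabla L$, not $\nabla L$, that points inward, and that is what the descending deformation in the mountain-pass proof uses. The paper's statement of \cref{mtnPassBoundary} has the same sign slip.
\item The hypotheses on $\rho$ and $N$ play no role in the boundary computation $1-\cos\phi>0$, and they are not why a degenerate polygon has fewer vertices. Coprimality only enters through minimal period. The bound $\rho\le\lfloor (N-1)/2\rfloor$ just excludes the reversed orbits of rotation number $N-\rho$.
\end{itemize}
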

\begin{proof} We will sketch a proof and refer to \cite{Tabachnikov:billiards} for more detail. The first step is to compute the path space of periodic trajectories, consisting of $x \in (S^1)^N$ with $x_i \neq x_{i+1}$ for all $i$, and show that each $\rho$ corresponds to a different connected component of the path space. In fact, the closure of each connected component is homeomorphic $S^1 \times [0,1]^{N-1}$, a compact manifold with boundary, and so the negative length functional has a minimum \cite{FarberTabachnikov:ConfigurationSpaces}. Moreover, the boundaries of the connected components are $x \in (S^1)^N$ with $x^i = x^{i+1}$ for at least one $i$. Geometrically, one can argue that the gradient of the negative length functional points away from the boundary; perturbing a boundary trajectory with $x^i = x^{i+1}$ so that $x^i\neq x^{i+1}$ makes the trajectory longer. Finally, we may cyclically permute the coordinates and arrive at a re-indexed copy of the same trajectory, so there are $N$ non-geometrically distinct minima. Applying \cref{mtnPassBoundary} between a pair of these, we find a mountain pass critical point corresponding to a second geometrically distinct periodic trajectory. 
\end{proof}
This argument further implies information about the Morse indices of the critical points; the minimizers are index 0, and nondegenerate mountain passes are index 1 by \cref{mtnPassIndex}. Our hope is to use \cref{doubleCover} to prove results about the stability of these trajectories. 

\subsection{Linearly Stable 2-Periodic Mountain Pass Trajectories}
Combining the preceding existence result and stability criterion, we can prove a sufficient condition for a billiard to have a linearly stable 2-periodic mountain pass trajectory, depending on the behavior of the evolute of the boundary curve. The stability computation recovers a result of Kozlov and Treshchev, although by calculating the index of the second iterate, rather than proceeding directly \cite{KozTre:billiards}. 

\begin{dfn} The \emph{evolute} of a $C^2$ plane curve is the locus of its centers of curvature, or equivalently, the envelope of the family of lines normal to the curve.
\end{dfn}

\begin{prop}\label{evoluteStability} Let $\gamma$ be a $C^2$, convex billiard table boundary. If $\gamma$ contains its evolute $\beta$, then the mountain pass 2-periodic trajectory from \cref{birkhoffExistence} is linearly stable.
\end{prop}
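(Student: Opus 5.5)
We plan to combine \cref{indexIterationConvexityConcavity} with \cref{doubleCover}. It suffices to show that $i_\Delta(x^2)$ is odd, where $x=(x^1,x^2,x^3=x^1)$ is the mountain pass $2$-periodic trajectory. A $2$-periodic trajectory is a chord hitting $\gamma$ orthogonally at both ends, so $\phi^1=\phi^2=\pi/2$. Write $\ell$ for its length and $\kappa_1,\kappa_2$ for the curvatures at its endpoints.

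\textbf{Step 1 (geometric input).} At each endpoint the normal line to $\gamma$ is the chord itself. The center of curvature $\beta(x^i)$ therefore lies on the line through the chord, at distance $1/\kappa_i$ from $\gamma(x^i)$ on the inner side. The chord meets $\gamma$ again only at the opposite endpoint. Hence the hypothesis that $\gamma$ contains its evolute forces $1/\kappa_i<\ell$, that is, $\kappa_i\ell>1$ for $i=1,2$. This is the step I expect to require the most care: we must make precise what ``contains'' means, use strictness so that the center of curvature does not land on $\gamma$, and handle $\kappa_i=0$ (then the center is at infinity, which is excluded). Strict convexity of the table is used here.

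\textbf{Step 2 (Hessian entries).} With $\sin\phi^i=1$, the formulas \cref{diagonalEntries} and \cref{billiardHessianOffDiagonals} give the following entries:
\begin{equation*}
a_1=\kappa_1-\tfrac{1}{\ell},\quad a_2=2\kappa_2-\tfrac{2}{\ell},\quad a_3=\kappa_1-\tfrac{1}{\ell},\quad b_1=b_2=-\tfrac{1}{\ell}.
\end{equation*}
By Step 1, $a_1,a_2,a_3>0$.

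\textbf{Step 3 (index contributions).} First, $T_x\mathcal{C}^2_0$ is spanned by $(0,1,0)$, on which $H_x$ equals $a_2>0$. So $i_0(x)=0$ and $x$ is nondegenerate for fixed-endpoint variations.

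Next, on $T_x\mathcal{C}^2_{-\Delta}=\{(u,t,-u)\}$ the cross terms $2b_1ut-2b_2ut$ cancel. This gives $H_x=(a_1+a_3)u^2+a_2t^2$, which is positive definite. Hence $x$ is nondegenerate for antiperiodic variations and $i_{-\Delta}(x)=0$; equivalently, $C_{-\Delta}(x)=0$ by \cref{convexityVector}, with antiperiodic Jacobi variation $(1,0,-1)$.

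For the periodic part we use the mountain pass structure. By \cref{birkhoffExistence} and \cref{mtnPassIndex}, assuming the mountain pass critical point is nondegenerate (which we would state as a standing genericity assumption), we have $i_\Delta(x)=1$, so $C_\Delta(x)=1$. As a consistency check we can compute this directly. The periodic Jacobi variation is $(1,t,1)$ with $t=2/(\ell a_2)$, and
\begin{equation*}
H_x=2a_1+a_2t^2-\tfrac{4t}{\ell}=2\Bigl(\kappa_1-\tfrac{1}{\ell}\Bigr)-\tfrac{2}{\ell(\kappa_2\ell-1)}.
\end{equation*}
Its sign should agree with $C_\Delta=1$ for the mountain pass chord, but we will rely on the variational characterization rather than this computation.

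\textbf{Step 4 (conclusion).} Periodic variations of $x^2$ split into the $H_{x^2}$-orthogonal sum of $N$-periodic and antiperiodic parts, as in \cref{antiperiodicPeriodicDecomposition}. Since $x$ is nondegenerate on both parts, $x^2$ is nondegenerate. Then \cref{indexIterationConvexityConcavity} gives
\begin{equation*}
i_\Delta(x^2)=2i_0(x)+C_\Delta(x)+C_{-\Delta}(x)=0+1+0=1,
\end{equation*}
which is odd. By \cref{doubleCover}, the multipliers lie on the unit circle and the trajectory is linearly stable.
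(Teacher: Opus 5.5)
Your proposal is correct and is essentially the paper's proof: the paper writes $i_\Delta(x^2)=i_\Delta(x)+i_{-\Delta}(x)$, takes $i_\Delta(x)=1$ from the mountain pass characterization, and uses the evolute condition $\kappa_i\ell>1$ to show that $H_x$ is diagonal and positive definite on $T_x\mathcal{C}^2_{-\Delta}$ in the basis $(1,0,-1),(0,1,0)$. That is exactly your computation, repackaged through $2i_0(x)+C_\Delta(x)+C_{-\Delta}(x)$ with $i_0(x)=0$. Your explicit justification that $x^2$ is nondegenerate, and your care about strictness and $\kappa_i\neq 0$ in the evolute hypothesis, fill in points the paper leaves implicit; your diagonal factor $2$, rather than the paper's $4$, is also the correct one, though the slip does not affect the sign argument.
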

\begin{proof} This proof follows as a consequence of several previous results. Recall from \cref{antiperiodicPeriodicDecomposition} that $i_\Delta(x^2) = i_\Delta(x) +i_{-\Delta}(x)$. Since $x$ is a mountain pass critical point, $i_\Delta(x) = 1$, so we need only compute $i_{-\Delta}(x)$.  We note that a 2-periodic trajectory has perpendicular reflections with the boundary and that each segment of the trajectory is the same length $\ell$, so $\phi^i = \frac{\pi}{2}$ and $\ell_i = \ell$ for $i = 1,2,3$, and $\kappa_1 = \kappa_3$ by periodicity. As a result, our matrix for $H_{x},$ as computed in \cref{billiardHessianDiagonals} and \cref{billiardHessianOffDiagonals}, takes a particularly simple form, 
\begin{equation}
	H_{x} = \m{\kappa_1-1/\ell & -1/\ell& 0 \\ -1/\ell & 2\kappa_2 - 2/\ell & -1/\ell\\ 0 & -1/\ell & \kappa_1 - 1/\ell}.
\end{equation}
We want to know the index of this quadratic form when restricted to the subspace of anti-periodic variations, $T_x\mathcal{C}^2_{-\Delta}$. We can explicitly compute a matrix for the restriction of the bilinear form $H_x$ to this space, in the basis $\{e^1,e^2\}$ where $e^1 = (1,0,-1)^T,\  e^2 = (0,1,0)^T.$ Computing entrywise produces a diagonal matrix,

\begin{align}
	(H_x)|_{T_x\mathcal{C}^2_{-\Delta}} &= \m{H_x(e^1,e^1) &H_x(e^1,e^2)\\H_x(e^2,e^1)&H_x(e^2,e^2)} \\ &= 4\m{(\kappa_1-1/\ell)&0\\ 0&(\kappa_2-1/\ell)}
\end{align}
By hypothesis, the evolute is contained within the billiard boundary. Since the evolute is the locus of the centers of curvature and each segment of the trajectory is perpendicular to the boundary, this means $\ell > \frac{1}{\kappa_i}$. Taking the reciprocal gives $\kappa_i > 1/\ell$, so $(H_x)|_{T_x\mathcal{C}^2_{-\Delta}}$ is positive definite and $i_{-\Delta}(x) = 0$. We conclude that $i_\Delta(x^2) = 1$, and by \cref{doubleCover}, the trajectory is linearly stable. 
\end{proof}

\section{Acknowledgements}
The results presented here were substantially developed in the PhD. thesis of the first named author, under the supervision of the second named author, at Queen's University.

This work has been partially funded by the second named author's discovery grant from NSERC. 

\bibliographystyle{elsarticle-num.bst} 
\bibliography{bibliography.bib}

@ARTICLE{Duistermaat:MorseIndex,
   AUTHOR="J.J. Duistermaat",
   TITLE="On the {Morse} Index in Variational Calculus",
   JOURNAL="Advances in Mathematics",
   VOLUME="21",
   PAGES="173-195",
   YEAR="1976",
}

@ARTICLE{BTZ:geodesics,
	AUTHOR = "W. Ballmann and G. Thorbergsson and W. Ziller",
	TITLE = "Closed Geodesics on Positively Curved Manifolds",
	JOURNAL = "Annals of Mathematics",
	VOLUME = "116, No. 2",
	PAGES = "213-247",
	YEAR = "1982",
}

@BOOK{McD-S:IntroSympTop, 
	AUTHOR = "Dusa McDuff and Dietmar Salamon",
	TITLE = "Introduction to Symplectic Topology (3rd ed.)",
	YEAR = 2017,
	PUBLISHER = "Oxford University Press",
	ADDRESS = "Oxford",
}

@ARTICLE{D-O:czindex,
	AUTHOR = "Yanxia Deng and Daniel Offin",
	TITLE = "Stability of Periodic Orbits by {Conley-Zehnder} index theory",
	JOURNAL = "Journal of Differential Equations",
	YEAR = "25 March 2022",
	VOLUME = "314",
	PAGES = "473-490",
}

@ARTICLE{Offin:HyperbolicGeodesics,
	AUTHOR = "Daniel Offin",
	TITLE = "Hyperbolic Minimizing Geodesics",
	JOURNAL = "Transactions of the American Mathematical Society",
	YEAR = "2000",
	VOLUME = "352, No. 7",
	PAGES = "3323-3338",
}

@Book{Nicolaescu:MorseTheory,
	AUTHOR = "Liviu Nicolaescu",
	TITLE = "An Invitation to {Morse} Theory",
	YEAR = "2011",
	PUBLISHER = "Springer",
	ADDRESS = "New York",
}

@BOOK{Milnor:morseindex,
	AUTHOR = "John Milnor",
	TITLE = "Morse Theory",
	YEAR = "1973",
	PUBLISHER = "Princeton University Press",
	Address = "Princeton",
}

@BOOK{Tabachnikov:billiards,
	AUTHOR = "Serge Tabachnikov",
	TITLE = "Geometry and Billiards",
	YEAR = "2005",
	PUBLISHER = "American Mathematical Society",
}

@BOOK{Artin:GeometricAlgebra,
	AUTHOR = "Emil Artin",
	TITLE = "Geometric Algebra",
	YEAR = "1957",
	PUBLISHER = "Interscience Publishers Inc.",
	ADDRESS = "New York",
}

@BOOK{Teschl:JacobiOperators,
	AUTHOR = "Gerald Teschl",
	TITLE = "{Jacobi} Operators and Completely Integrable Nonlinear Lattices",
	YEAR = "2000",
	PUBLISHER = "American Mathematical Society",
	ADDRESS = "Providence",
}

@ARTICLE{MackayMeiss:LinearStability,
	AUTHOR = "R.S. {MacKay} and J.D. Meiss",
	TITLE = "Linear stability of periodic orbits in {Lagrangian} systems",
	JOURNAL = "Physics Letters A",
	YEAR = "10 October 1983",
	VOLUME = "98, Issue 3",
	PAGES = "92-94",
}

@ARTICLE{FarberTabachnikov:ConfigurationSpaces,
	AUTHOR = "Michael Farber and Serge Tabachnikov",
	TITLE = "Topology of cyclic configuration spaces and periodic trajectories of multi-dimensional billiards",
	JOURNAL = "Topology",
	YEAR = "2002",
	VOLUME = "41",
	PAGES = "553-589",
}

@BOOK{KozTre:billiards,
	AUTHOR = " {Valeri\u{\i}} Treschev and {Dmitri\u{\i}} Kozlov   ",
	TITLE = "Billiards: A Genetic Introduction to the Dynamics of Systems with Impacts",
	YEAR = "1991",
	PUBLISHER = "American Mathematical Society",
	Address = "Providence",
}

@ARTICLE{BialyTsod,
	AUTHOR = "Misha Bialy and Daniel Tsodikovich",
	TITLE = "Locally maximising orbits for the non-standard generating function of convex billiards and applications",
	JOURNAL = "Nonlinearity",
	VOLUME = "36, No. 3",
	YEAR = "2023",
}
\end{document}